\definecolor{rouge}{rgb}{0.7,0.00,0.00}
\definecolor{vert}{rgb}{0.00,0.5,0.00}
\definecolor{bleu}{rgb}{0.00,0.00,0.8}
\newtheorem{theorem}{Theorem}[section]
\newtheorem*{theorem*}{Theorem}
\newtheorem{lemma}[theorem]{Lemma}
\newtheorem{proposition}[theorem]{Proposition}
\newtheorem{condition}{Condition}
\newtheorem{conditionA}{A\kern-0.1mm}
\theoremstyle{definition}
\newtheorem{example}[theorem]{Example}
\newtheorem{remark}[theorem]{Remark}
\def \eref#1{\hbox{(\ref{#1})}}
\numberwithin{equation}{section}
\def\geq{\geqslant}
\def\leq{\leqslant}
\def\RR{\mathbb{R}}
\def\EE{\mathbb{E}}
\def \eref#1{\hbox{(\ref{#1})}}
\def\EE{\mathbb{ E}}
\begin{document}
	
	\title[Diffusion Approximation for SDEs with State-Dependent Switching]
	{Diffusion Approximation for Slow-Fast SDEs with State-Dependent Switching}

\author{Xiaobin Sun}
\curraddr[Sun, X.]{ School of Mathematics and Statistics/RIMS, Jiangsu Normal University, Xuzhou, 221116, People's Re
public of China}
\email{xbsun@jsnu.edu.cn}

\author{Jue Wang}
\curraddr[Wang, J.]{ School of Mathematics and Statistics, Jiangsu Normal University, Xuzhou, 221116, People's Re
public of China}
\email{2020221246@jsnu.edu.cn}

\author{Yingchao Xie}
\curraddr[Xie, Y.]{ School of Mathematics and Statistics/RIMS,  Jiangsu Normal University, Xuzhou, 221116, China}
\email{ycxie@jsnu.edu.cn}
	
	\begin{abstract}
		In this paper, we study the diffusion approximation for slow-fast stochastic differential equations with state-dependent switching, where the slow component $X^{\varepsilon}$ is the solution of a stochastic differential equation with  additional homogenization term, while the fast component $\alpha^{\varepsilon}$ is a switching process. We first prove the weak convergence of $\{X^\varepsilon\}_{0<\varepsilon\leq 1}$ to $\bar{X}$ in the space of continuous functions, as $\varepsilon\rightarrow 0$. Using the martingale problem approach and Poisson equation associated with a Markov chain, we identify this weak limiting process as the unique solution $\bar{X}$ of a new stochastic differential equation, which has new drift and diffusion terms that differ from those in the original equation. Next, we prove the order $1/2$ of weak convergence of $X^{\varepsilon}_t$ to $\bar{X}_t$ by applying suitable test functions $\phi$, for any $t\in [0, T]$. Additionally, we provide an example to illustrate that the order we achieve is optimal.
\end{abstract}

\maketitle

\section{Introduction}

Stochastic differential equations (SDEs) with state-dependent switching are employed to model the dynamics of a complex system characterized by a countable number of scenarios, denoted as $\mathbb{S}$. In each scenario, the system evolves a certain diffusion, and it can switch between scenarios and the switching depends on the current location or state of the system. Usually, it can be described as follows:
\begin{align*}\label{SDEsMS}
\begin{cases} dX_t=b(X_t, \alpha_t)dt+\sigma(X_t, \alpha_t)dW_t,\\
\mathbb{P}\{\alpha_{t+\Delta}=j|\alpha_{t}=i, X_s, \alpha_s, s\leq
t\}=\left\{\begin{array}{l}
\displaystyle q_{ij}(X_t)\Delta+o(\Delta),~~~~~~i\neq j,\\
1+q_{ii}(X_t)\Delta+o(\Delta),~~~i=j,\end{array}\right.
\end{cases}
\end{align*}
where $\{W_t\}_{t\geq 0}$ is a standard $d$-dimensional Brownian motion, $b$ and $\sigma$ two measurable functions from $\mathbb{R}^n\times\mathbb{S}$ to $\mathbb{R}^n$ and $\mathbb{R}^n\times \mathbb{R}^d$ respectively, and $Q(x)= (q_{ij}(x))_{i,j \in \mathbb{S}}$ is a $Q$-matrix depending on $x\in \mathbb{R}^n$. SDEs with state-dependent switching are widely used in various fields, including finance, engineering, biology, and physics. Therefore, it has attracted interest from researchers, who are actively working on this topic and investigate their potential applications. For more results in this area, we refer e.g. \cite{MY2006,SX2014,XZW2021,YZ2010} and references therein.

The diffusion approximation problem in slow-fast SDEs mainly concerns the weak convergence of singularly perturbed SDEs which involves a homogenization term. In the beginning of the 21st century, Pardoux and Veretennikov \cite{PV2001,PV2003,PV2005} use the technique of the Poisson equation to study the diffusion approximation problem for slow-fast SDEs, and taking the martingale problem approach to characterize the limiting process.
For more results on this subject, we refer e.g. \cite{BK2004,BS2023,FW2021,HLS2025,RX2021,WR2015}.

In the above framework, we present a small positive parameter $\varepsilon$ that indicates the ratio of the time scales for the diffusion process and the switching process, that is the diffusion process occurs on a slower time scale, while the switching process occurs at a much faster rate. Therefore, we consider the following slow-fast SDEs with state dependent switching.
\begin{align*}
\begin{cases} dX_t^{\varepsilon}=\frac{1}{\sqrt{\varepsilon}}K(X_t^{\varepsilon}, \alpha_t^{\varepsilon})dt+b(X_t^{\varepsilon}, \alpha_t^{\varepsilon})dt+\sigma(X_t^{\varepsilon}, \alpha_t^{\varepsilon})dW_t,\\
\mathbb{P}\{\alpha^{\varepsilon}_{t+\Delta}=j|\alpha_{t}^{\varepsilon}=i, X_s^{\varepsilon}, \alpha_s^{\varepsilon}, s\leq
t\}=\left\{\begin{array}{l}
\displaystyle Q^{\varepsilon}_{ij}(X_t^{\varepsilon})\Delta+o(\Delta),~~~~~~i\neq j,\\
1+Q^{\varepsilon}_{ii}(X_t^{\varepsilon})\Delta+o(\Delta),~~~i=j,\end{array}\right.
\end{cases}
\end{align*}
where $K/{\sqrt{\varepsilon}}$ represents the homogenization term, which is significant in the study of partial differential equations (see \cite{HP2008,HP2021}). Meanwhile, the switching process itself consists of two distinct time scales, represented by the state-dependent transition matrix $Q^{\varepsilon}(x)=\varepsilon^{-1}Q(x)+\varepsilon^{-1/2}\tilde{Q}(x)$. By the theory of diffusion approximation, the homogenization $K{\sqrt{\varepsilon}}$ and $\varepsilon^{-1/2}\tilde{Q}$  will contribute to an additional diffusion term and a drift term, respectively, in the limiting equation.

Without the homogenization in the previously mentioned model, the asymptotic behavior of the slow component $X^{\varepsilon}$ as $\varepsilon\to 0$ is referred to as the averaging principle. In the context of state-independent switching, where $Q^{\varepsilon}(x)$ is constant matrix, many scholars have studied the averaging principle (see e.g. \cite{BYY2017,CWW2024, LLSW2023, PXYZ2018, YY2004}). In the context of state-dependent switching case, Il'in et. al. \cite{IKY1999} study the weak convergence analysis of a class of singularly perturbed switching diffusions with fast and slow motions. Faggionato et. al. \cite{FGR2010} studied the averaging principle in terms of convergence in probability in the case  $\sigma\equiv 0$ and  $\mathbb{S}$ is finite, then Pakdaman et al. \cite{PTW2012} used an asymptotic expansion technique to establish a central limit theorem. Mao and Shao \cite{MS2022} studied the averaging principle for the case $\mathbb{S}$ is countably infinite. Subsequently, the averaging principle for a specific class of multi-scale piecewise-deterministic Markov processes in infinite dimensions were investigated in \cite{GT2012} and \cite{GT2014}.  In the case that the fast process has multiple invariant measures, Freidlin and Koralov \cite{FK2021},  along with Goddard et al. \cite{GOPS2023}, consider its averaging principle for the slow process to be described by SDEs and ODEs (ordinary differential equations), respectively.  However, the corresponding weak convergence order has not been addressed in these papers.

Recently, the authors \cite{SX2023+} have established theory of the Poisson equation related to a Markov chain, which is  applied to prove the averaging principles and central limit theorems.
Therefore, it make possible to further consider the diffusion approximation problem.
More precisely, we first aim to study the asymptotic behavior, as $\varepsilon\to 0$, of the slow component $X^{\varepsilon}$  converges weakly to $\bar X$ in the continuous function space $C([0,T],\mathbb{R}^n)$. Then by martingale problem approach, the weak limiting process can be precisely characterized as the unique solution of a new SDE:
\begin{equation*}
d\bar{X}_t = \bar B(\bar{X}_t) \, dt + \Theta(\bar{X}_t) \, d\bar{W}_t,
\end{equation*}
where $\bar{W}$ is an $n$-dimensional standard Brownian motion on the probability space $(\Omega, \mathcal{F}, \mathbb{P})$, and the drift and diffusion coefficients $\bar B$ and $\Theta$ are expressed in terms of the solution of Poisson equation.

It is also worthy to point the challenges through a comparison with the classical slow-fast diffusion process.  For instance,  an important step involves studying the tightness of $\{X^{\varepsilon}\}_{\varepsilon\in(0,1]}$ in $C([0,T];\RR^{n})$.  In the context of fast process is a diffusion process, scholars would like use the tight criterion in $C([0,T];\RR^{n})$. However, note that the switching process is a jump process, so we need to establish the tightness of $\{X^{\varepsilon}\}_{\varepsilon\in(0,1]}$ in the $c\`adl\`ag$ function space $\mathbb{D}([0,T];\mathbb{R}^{n})$.

It is widely recognized that finding optimal/explicit convergence rates is crucial in multi-scale numerical problem, for instance, Br\'ehier \cite{B2020} mentioned that "\emph{the analysis of the full error of the scheme requires as a preliminary step to estimate the error in the averaging principle}". Therefore, the second aim of this paper is to further study the weak convergence order of $X^{\varepsilon}_t$ to $\bar{X}_t$. More precisely, we indent to get a weak convergence with order $1/2$ in $C^4_p(\RR^n)$, which is the space of functions with polynomial growth of the fourth continuous partial derivative, that is, for $\phi\in C^4_p(\RR^n)$,
\begin{align*}
\sup_{0\leq t\leq T}|\EE \phi(X^{\varepsilon}_t)-\EE \phi(\bar{X}_t)|\leq C\left(1+|x|^{k}\right)\varepsilon^{\frac{1}{2}}, \quad \forall \varepsilon\in(0,1].
\end{align*}
$1/2$ will shown to be optimal by a concrete example. The main technique is based on a combination of the  Poisson equation and Kolmogorov equation, which have been confirmed as a powerful technique that has successfully been used to obtain the optimal strong or weak convergence rates in various slow-fast stochastic systems, see e.g.  \cite{BS2023, B2020,PS2008,RSX2021,SSWX2024,SXX2022,SX2023} and references therein.

The remainder construction of the paper as follows. In section 2, we give some notations and assumptions, as well as the detailed presentation of our main results. In section 3, we give some a prior estimates of the solution and study the well-posedness of the averaged equation. We give the detailed proof of our first and second main results in the Section 4 and Section 5, respectively. Throughout this paper, the symbols $k$, $k_p$, $C_T$, and $C_{p,T}$ denote positive constants which may change from line to line, where the subscript parameters are used to emphasize that the constants depends on these parameters.

\section{Main results}

In this section, we will give some notations and assumptions firstly, then the main results are stated.

\subsection{Notations and assumptions}
Denote $\mathbb{S}:=\{1,2,\ldots,m_0\}$ for some $m_0\leq\infty$. Let $\mathscr{B}_b(\mathbb{S})$ be the set of bounded function on $\mathbb{S}$, and denote $\|f\|_{\infty}:=\sup_{i\in \mathbb{S}}|f(i)|$. Denote $\|\mu-\nu\|_{\rm{var}}$ as the total variation distance between probability measures $\mu$ and $\nu$ on $\mathbb{S}$.
We will use $|\cdot|$ and  $\langle\cdot, \cdot\rangle$ to denote the Euclidean  norm and the inner product, respectively, and denote $\|\cdot\|$ as the matrix norm or the operator norm if without confusion. For any positive integers $k, l$, we will use $\RR^k\otimes\RR^l$ to denote the set of $k\times l$ matrices. If $m_0=\infty$,
$\RR^\infty\otimes \RR^\infty$ stands for the set of matrices $M=(M_{ij})_{i,j\in \mathbb{S}}$ with  $\|M\|_{\ell}:=\sup_{i\in\mathbb{S}}\sum_{j\in \mathbb{S}}|m_{ij}|<\infty$.

\vspace{0.1cm}
For positive integers $l_1,l_2,k$, the following function spaces will be used in this paper:

(1) $C^k(\RR^n,\RR^{l_1}\otimes\RR^{l_2})$ (resp. $C^k_b(\RR^n,\RR^{l_1}\otimes\RR^{l_2})$ or $C^k_p(\RR^n,\RR^{l_1}\otimes\RR^{l_2})$) is the space of $\RR^{l_1}\otimes\RR^{l_2}$-valued functions $\varphi(x)$ on $\RR^n$ with all partial derivatives of up to order $k$ being continuous (resp. continuous and bounded or continuous and polynomial growth). $C^k(\RR^n,\RR^{\infty}\otimes\RR^{\infty})$ (resp. $C^k_b(\RR^n, \RR^{\infty}\otimes\RR^{\infty})$ or $C^k_p(\RR^n, \RR^{\infty}\otimes\RR^{\infty})$) stands for space of $\RR^{\infty}\otimes\RR^{\infty}$-valued functions $\varphi(x)$ on $\RR^n$ with  all partial derivatives of up to order $k$ being continuous (resp. continuous and bounded or continuous and polynomial growth).

(2) $C^k(\RR^n\times \mathbb{S},\RR^{l_1}\otimes \RR^{l_2})$ (resp. $C^k_{b}(\RR^n\times \mathbb{S},\RR^{l_1}\otimes \RR^{l_2})$ or $C^k_{p}(\RR^n\times \mathbb{S},\RR^{l_1}\otimes \RR^{l_2})$) is the space of all mappings $\varphi(x,i): \RR^n\times \mathbb{S}\rightarrow \RR^{l_1}\otimes \RR^{l_2}$ with $\varphi(\cdot,i)\in C^k(\RR^n,\RR^{l_1}\otimes \RR^{l_2})$ (resp. $C^k_{b}(\RR^n,\RR^{l_1}\otimes \RR^{l_2})$ or $C^k_p(\RR^n,\RR^{l_1}\otimes \RR^{l_2})$) for any $i\in\mathbb{S}$, and $\|\partial^j_x\varphi(x,\cdot)\|_{\infty}$ are finite (resp. bounded or polynomial growth) with respective to $x$, for any $j=0,\ldots,k$.

\vspace{0.1cm}
In this paper, we focus on the following slow-fast SDEs with state-dependent switching:
\begin{align}\label{Mainequation}
\begin{cases} dX_t^{\varepsilon}=\frac{1}{\sqrt{\varepsilon}}K(X_t^{\varepsilon}, \alpha_t^{\varepsilon})dt+b(X_t^{\varepsilon}, \alpha_t^{\varepsilon})dt+\sigma(X_t^{\varepsilon}, \alpha_t^{\varepsilon})dW_t,\\
\mathbb{P}\{\alpha^{\varepsilon}_{t+\Delta}=j|\alpha_{t}^{\varepsilon}=i, X_s^{\varepsilon}, \alpha_s^{\varepsilon}, s\leq
t\}=\left\{\begin{array}{l}
\displaystyle Q^{\varepsilon}_{ij}(X_t^{\varepsilon})\Delta+o(\Delta),~~~~~~i\neq j,\\
1+Q^{\varepsilon}_{ii}(X_t^{\varepsilon})\Delta+o(\Delta),~~~i=j,\end{array}\right.
\end{cases}
\end{align}
with initial value $(X_0^{\varepsilon},\alpha_0^{\varepsilon})=(x,\alpha)\in \mathbb{R}^n\times \mathbb{S}$, where $\{W_t\}_{t\geq 0}$ is a standard $d$-dimensional standard Brownian motion on a complete filtered probability space
$(\Omega,\mathscr{F},\{\mathscr{F}_{t}\}_{t\geq0},\mathbb{P})$, $Q^{\varepsilon}(x):=\varepsilon^{-1}Q(x)+\varepsilon^{-1/2}\tilde{Q}(x)$, and
$$
K: \mathbb{R}^n\times\mathbb{S}\rightarrow \mathbb{R}^n; \quad b: \mathbb{R}^n\times\mathbb{S}\rightarrow \mathbb{R}^n; \quad \sigma:\mathbb{R}^n\times\mathbb{S}\rightarrow \mathbb{R}^n\times \mathbb{R}^d;
$$
$$
Q:\mathbb{R}^n\rightarrow \mathbb{R}^{m_0}\times \mathbb{R}^{m_0};\quad  \tilde{Q}:\mathbb{R}^n\rightarrow \mathbb{R}^{m_0}\times \mathbb{R}^{m_0}.
$$

Suppose that $K$, $b$, $\sigma$, $Q$ and $\tilde{Q}$ satisfy the following conditions:
\begin{conditionA}\label{A1} Assume there exists $C > 0$ such that for any $x, y \in \mathbb{R}^n$,
\begin{align}\label{ConA11}\begin{split}
&\|b(x,\cdot) - b(y,\cdot)\|_{\infty} \leq C|x - y|,\quad \|b(x,\cdot)\|_{\infty} \leq C(1 + |x|),\\
&\|\sigma(x,\cdot) - \sigma(y,\cdot)\|_{\infty} \leq C|x - y|,\quad \sup_{x \in \mathbb{R}^n}\|\sigma(x,\cdot)\|_{\infty} \leq C.
\end{split}
\end{align}
Furthermore, $\sigma$ satisfies the following uniform non-degeneracy condition:
\begin{equation}
\inf_{x \in \mathbb{R}^n, i \in \mathbb{S}, z \in \mathbb{R}^n \backslash \{0\}} \frac{\langle (\sigma(x,i)\sigma^{*}(x,i)) \cdot z, z \rangle}{|z|^2} > 0. \label{NonD}
\end{equation}
\end{conditionA}

\begin{conditionA}\label{A2}
(i) $Q(x)$ and $\tilde{Q}(x)$ are measurable functions from $\mathbb{R}^n \rightarrow \mathbb{R}^{m_0} \otimes \mathbb{R}^{m_0}$ and are both conservative, i.e., for all $x \in \mathbb{R}^n$,
\[
q_{ij}(x) \geq 0 \quad \text{for all} \quad i \neq j \in \mathbb{S}, \quad
\sum_{j \in \mathbb{S}} q_{ij}(x) = 0 \quad \text{for all} \quad i \in \mathbb{S},
\]
\[
\tilde{q}_{ij}(x) \geq 0 \quad \text{for all} \quad i \neq j \in \mathbb{S}, \quad
\sum_{j \in \mathbb{S}} \tilde{q}_{ij}(x) = 0 \quad \text{for all} \quad i \in \mathbb{S}.
\]

(ii) $Q(x)$ is irreducible, i.e., for all $x \in \mathbb{R}^n$, the equation
\[
\mu^x Q(x) = 0, \quad \text{with} \quad \sum_{i \in \mathbb{S}} \mu^x_i = 1,
\]
has a unique solution $\mu^x = (\mu^x_1, \mu^x_2, \ldots, \mu^x_{m_0})$ with $\mu^x_i > 0$ for all $i \in \mathbb{S}$. Furthermore, let $P^x_t := (p^x_{ij}(t))_{i,j \in \mathbb{S}}$ be the transition probability matrix of $Q(x)$. Assume that $P^x_t$ satisfies uniform ergodicity, i.e., there exist $C > 0, \lambda > 0$ such that
\begin{equation}
\sup_{i \in \mathbb{S}, x \in \mathbb{R}^n} \|p^x_{i \cdot}(t) - \mu^x\|_{\text{var}} \leq Ce^{-\lambda t}, \quad \forall t > 0. \label{ergodicity}
\end{equation}

(iii) Assume $Q \in C^2(\mathbb{R}^n, \mathbb{R}^{m_0} \otimes \mathbb{R}^{m_0})$ and $\tilde{Q} \in C^1(\mathbb{R}^n, \mathbb{R}^{m_0} \otimes \mathbb{R}^{m_0})$ and there exists $C > 0$ such that
\begin{align}
\sup_{x \in \mathbb{R}^n} \left( \|\nabla Q(x)\|_{\ell} + \|\nabla^2 Q(x)\|_{\ell} + \|\nabla \tilde{Q}(x)\|_{\ell} \right) \leq C. \label{LipQ}
\end{align}
\begin{align}\label{FinteA}
\begin{split}
&A(x) := \sum_{i \in \mathbb{S}} \sum_{j \in \mathbb{S} \backslash \{i\}} q_{ij}(x) \leq C(1 + |x|), \quad \forall x \in \mathbb{R}^n, \\
&\tilde{A}(x) := \sum_{i \in \mathbb{S}} \sum_{j \in \mathbb{S} \backslash \{i\}} \tilde{q}_{ij}(x) \leq C(1 + |x|), \quad \forall x \in \mathbb{R}^n.
\end{split}
\end{align}
\end{conditionA}

\begin{conditionA}\label{A3}
Assume that $K \in C^2_b(\mathbb{R}^n \times \mathbb{S}, \mathbb{R}^n)$ and satisfies the following "centering condition",
\begin{equation}
\sum_{i \in \mathbb{S}} K(x, i) \mu^x_i = 0, \quad \forall x \in \mathbb{R}^n. \label{CenCon}
\end{equation}
\end{conditionA}

\begin{remark} In the following, we give some comments on the assumptions.

(i) Conditions \eref{ConA11} and \eref{FinteA} imply Eq.\eref{Mainequation} has a unique strong solution by \cite[Theorem 3.3]{NY2016}.

(ii) Assumption \ref{A2} ensures that $Q(x)$ generates a unique Markov process $\{\alpha^{x}_t\}_{t\geq 0}$. The condition \eref{ergodicity} is called strong ergodicity for $Q(x)$ uniformly in $x$. It is worthy to mention the book \cite{C2005} and references therein, which provide various sufficient conditions for strong ergodicity \eref{ergodicity} of continuous-time Markov chains, such the birth-death process.

(iii) When $m_0$ is finite,  \eref{LipQ} imply \eref{FinteA} holds. However, for $m_0=\infty$,  \eref{FinteA} is necessary, since this and finite moment of the solution $X^{\varepsilon}$ of any order are use to control the switching term.
\end{remark}

\subsection{Main results}
This subsection presents the main results of this paper. To this end, we introduce the following Poisson equation
\begin{equation}\label{PE1}
-Q(x)\Phi(x,\cdot)(i) = K(x,i)
\end{equation}
which is equivalent to
\begin{equation*}
-Q(x)\Phi^l(x,\cdot)(i) = K^l(x,i), \quad l = 1,2,\ldots,n.
\end{equation*}
Here, $\Phi(x,i) = (\Phi^1(x,i), \ldots, \Phi^n(x,i))$.

For  \eref{PE1}, we can obtain the following conclusion, whose proof can be founded in  \cite[Theorem 2.2]{SX2023+}.
\begin{proposition}\label{Poisson}
Suppose that assumptions \ref{A2} and \ref{A3} hold. Let
\begin{equation}
\Phi(x,i) := \int_{0}^{\infty} \mathbb{E} K(x,\alpha_{t}^{x,i}) \, dt. \label{SPE}
\end{equation}
where $\alpha_{t}^{x,i}$ is the unique $\mathbb{S}$-valued Markov chain generated by generator $Q(x)$ with initial
value $\alpha_{0}^{x,i}=i\in\mathbb{S}$. Then $\Phi(x,i)$ is a solution of Eq.(\ref{PE1}), and there exists a constant $C > 0$ such that for any $x \in \mathbb{R}^n$, we have
\begin{equation}\label{EPS}\begin{split}
&\|\Phi(x,\cdot)\|_{\infty} \leq C \|K(x,\cdot)\|_{\infty};\\
&\|\partial_{x} \Phi(x,\cdot)\|_{\infty} \leq C \left[ \|K(x,\cdot)\|_{\infty} \|\nabla Q(x)\|_{\ell} + \|\partial_{x}K(x,\cdot)\|_{\infty} \right];\\
&\|\partial_{x}^2 \Phi(x,\cdot)\|_{\infty} \leq C \left[ \|K(x,\cdot)\|_{\infty} \left( \|\nabla Q(x)\|_{\ell} + \|\nabla Q(x)\|_{\ell}^2 + \|\nabla^2 Q(x)\|_{\ell} \right) \right.\\
&\quad\quad\quad\quad\quad\quad\quad\quad\quad\left. + \|\partial_{x}K(x,\cdot)\|_{\infty} \|\nabla Q(x)\|_{\ell} + \|\partial_{x}^2 K(x,\cdot)\|_{\infty} \right].
\end{split}
\end{equation}
\end{proposition}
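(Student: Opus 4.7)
The plan is to proceed in three stages, following the classical probabilistic approach to Poisson equations for ergodic Markov chains.

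First, I would verify that $\Phi$ defined by \eref{SPE} is well-defined and satisfies \eref{PE1}. Writing $\mathbb{E}K(x,\alpha_t^{x,i}) = \sum_{j\in\mathbb{S}}K(x,j)p^x_{ij}(t)$ and subtracting $\sum_j K(x,j)\mu^x_j = 0$ via the centering condition \eref{CenCon}, the integrand becomes $\sum_j K(x,j)(p^x_{ij}(t)-\mu^x_j)$, so uniform ergodicity \eref{ergodicity} yields
$$|\mathbb{E}K(x,\alpha_t^{x,i})| \leq \|K(x,\cdot)\|_{\infty}\|p^x_{i\cdot}(t)-\mu^x\|_{\mathrm{var}} \leq C\|K(x,\cdot)\|_{\infty}e^{-\lambda t}.$$
This establishes absolute convergence of the integral and yields the first inequality of \eref{EPS}. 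Applying $Q(x)$ inside the integral and invoking the backward Kolmogorov equation $\partial_t p^x_{ik}(t) = \sum_j q_{ij}(x)p^x_{jk}(t)$ gives
$$Q(x)\Phi(x,\cdot)(i) = \int_0^\infty \partial_t \mathbb{E}K(x,\alpha_t^{x,i})\,dt = -K(x,i),$$
since the boundary term at infinity vanishes by the exponential decay and $\mathbb{E}K(x,\alpha_0^{x,i})=K(x,i)$.

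Second, for the gradient estimate I would differentiate the integrand and split into two pieces. The first, $\int_0^\infty \sum_j \partial_x K(x,j)(p^x_{ij}(t)-\mu^x_j)\,dt$, is controlled by $C\|\partial_x K(x,\cdot)\|_\infty$ through the same exponential decay. For the second, $\int_0^\infty \sum_j K(x,j)\,\partial_x(p^x_{ij}(t)-\mu^x_j)\,dt$, I would differentiate the Kolmogorov equation in $x$, obtaining an inhomogeneous linear ODE for $\partial_x p^x(t)$ with source $\partial_x Q(x)\cdot p^x(t)$, and apply Duhamel's formula together with \eref{ergodicity} to derive a bound of the form
$$\|\partial_x(p^x_{i\cdot}(t)-\mu^x)\|_{\mathrm{var}} \leq C(1+t)e^{-\lambda t}\|\nabla Q(x)\|_\ell,$$
uniformly in $x$ and $i$. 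Integration in $t$ then produces the second inequality of \eref{EPS}.

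Third, the second-order estimate is the main technical hurdle. Differentiating once more generates a pure $\partial_x^2 K$ contribution bounded by $\|\partial_x^2 K(x,\cdot)\|_\infty$, a mixed contribution $\partial_x K\cdot \partial_x p^x$ bounded by $\|\partial_x K(x,\cdot)\|_\infty\|\nabla Q(x)\|_\ell$, and a contribution involving $\partial_x^2(p^x_{ij}(t)-\mu^x_j)$. A second Duhamel analysis of the twice-differentiated Kolmogorov equation, whose source now carries both $\nabla^2 Q(x)$ and $(\nabla Q(x))^2$ terms acting on $p^x$, would yield
$$\|\partial_x^2(p^x_{i\cdot}(t)-\mu^x)\|_{\mathrm{var}} \leq C(1+t^2)e^{-\lambda t}\bigl(\|\nabla Q(x)\|_\ell^2+\|\nabla^2 Q(x)\|_\ell\bigr),$$
which remains integrable in $t$. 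The real obstacle is ensuring that the differentiation, summation and integration can be interchanged uniformly in $x$ in the countable-state regime $m_0=\infty$: this is precisely the purpose of the $\|\cdot\|_\ell$ norm imposed in \eref{LipQ}, which renders the double sums over $\mathbb{S}$ absolutely summable and propagates the exponential ergodicity through each differentiation. Assembling the three pieces yields the third inequality of \eref{EPS}.
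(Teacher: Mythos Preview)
The paper does not prove this proposition; it simply cites \cite[Theorem~2.2]{SX2023+}. Your outline is correct and is precisely the standard probabilistic argument one expects that reference to contain: exponential ergodicity together with the centering condition give the zeroth-order bound and the Poisson identity, and Duhamel's formula for the $x$-differentiated Kolmogorov equation propagates the decay to $\partial_x(p^x_{i\cdot}(t)-\mu^x)$ and $\partial_x^2(p^x_{i\cdot}(t)-\mu^x)$ with polynomial-in-$t$ prefactors.

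Two small points are worth making explicit when you write this up. First, the total-variation bound $\|\partial_x(p^x_{i\cdot}(t)-\mu^x)\|_{\mathrm{var}}\leq C(1+t)e^{-\lambda t}\|\nabla Q(x)\|_\ell$ relies on the identity $\sum_{l\in\mathbb{S}}\partial_x q_{kl}(x)=0$, obtained by differentiating the conservativity condition in \ref{A2}(i). This is what allows you to replace $p^x_{lj}(s)$ by $p^x_{lj}(s)-\mu^x_j$ inside the Duhamel integral $\partial_x P^x_t=\int_0^t P^x_{t-s}\,\partial_x Q(x)\,P^x_s\,ds$ and thereby pick up the factor $e^{-\lambda s}$; without it the naive estimate gives only a bound uniform in $t$ (as in the statement quoted later in the paper from \cite[Proposition~3.1]{SX2023+}), which is not integrable over $[0,\infty)$. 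Second, your decomposition of $\partial_x^2\Phi$ does not obviously generate the isolated term $\|K(x,\cdot)\|_\infty\|\nabla Q(x)\|_\ell$ appearing in \eref{EPS}; that term only weakens the stated bound and typically arises from a slightly different bookkeeping (for instance estimating $\partial_x\mu^x$ separately before combining), so its presence or absence does not affect correctness.
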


\vspace{0.2cm}
Subsequently, we introduce  notation:
\begin{equation}\label{DN}\begin{split}
&(K \otimes \Phi)(x,j) := ((K \otimes \Phi)_{ml}(x,j))_{\{1 \leq m,l \leq n\}} := \left( K_m(x,j) \Phi^l(x,j) \right)_{\{1 \leq m, l \leq n\}}; \\
&\partial_{x} \Phi_K(x,j) := (\partial_{x} \Phi_K^l(x,j))_{\{1 \leq l \leq n\}} := \left( \langle \partial_{x} \Phi^l(x,j), K(x,j) \rangle \right)_{\{1 \leq l \leq n\}}; \\
&F(x,j) := (F^l(x,j))_{1 \leq l \leq n} := \left( \tilde{Q}(x) \Phi^l(x,\cdot)(j) \right)_{1 \leq l \leq n} = \left( \sum_{i \in \mathbb{S}} \tilde{Q}_{ji}(x) \Phi^l(x,i) \right)_{1 \leq l \leq n};\\
&B(x,j):= (b + \partial_{x} \Phi_K + F)(x,j);\\
&\Sigma(x,j) := \left[ \sigma \sigma^* + (K \otimes \Phi) + (K \otimes \Phi)^* \right](x,j).
\end{split}\end{equation}
And for any matrix-valued function $\phi(x,\alpha)$ on $\mathbb{R}^n \times \mathbb{S}$, we define
\[
\bar{\phi}(x) = \sum_{j \in \mathbb{S}} \phi(x,j) \mu_j^x.
\]

Now, we present our first main result, that is a weak convergence result of $X^{\varepsilon}$ in the space of continuous functions $C([0,T],\mathbb{R}^n)$.

\begin{theorem}\label{main result 1}
Suppose that assumptions \ref{A1}-\ref{A3} hold. Then for any initial value $(x,\alpha) \in \mathbb{R}^n\times \mathbb{S}$ and $T > 0$, $\{X^{\varepsilon}\}_{\varepsilon > 0}$ converges weakly in $C([0,T];\mathbb{R}^n)$ to the unique solution $\bar{X}$ of the following equation
\begin{equation}
d\bar{X}_t = \bar{B}(\bar{X}_t) \, dt + \bar{\Sigma}^{1/2}(\bar{X}_t) \, d\bar{W}_t\label{bar}
\end{equation}
as $\varepsilon \to 0$,  where $\bar{W}$ is a standard $n$-dimensional standard Brownian motion on the probability space $(\Omega, \mathcal{F}, \mathbb{P})$,  $\bar{\Sigma}^{1/2}(x)$ is the square root of $\bar{\Sigma}(x)$.
\end{theorem}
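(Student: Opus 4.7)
The plan is to follow the classical scheme for weak convergence: first establish tightness of $\{X^\varepsilon\}_{\varepsilon\in(0,1]}$ in the Skorokhod space $\mathbb{D}([0,T];\mathbb{R}^n)$, then identify every subsequential weak limit as a solution of the martingale problem for the generator $\bar L f := \bar B\cdot\nabla f + \tfrac12\,\mathrm{tr}(\bar\Sigma\nabla^2 f)$, and finally invoke well-posedness of this martingale problem (obtained in Section 3) to conclude that the whole family converges and that the limit coincides with $\bar X$. Since $\bar X$ has continuous paths, weak convergence in $\mathbb{D}([0,T];\mathbb{R}^n)$ will automatically upgrade to weak convergence in $C([0,T];\mathbb{R}^n)$.

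For tightness, the obstruction is the singular drift $\varepsilon^{-1/2}K$. I would apply It\^o's formula for the pair $(X^\varepsilon,\alpha^\varepsilon)$ to $\sqrt\varepsilon\,\Phi(X^\varepsilon_t,\alpha^\varepsilon_t)$, where $\Phi$ is the Poisson solution provided by Proposition \ref{Poisson}. Using $-Q(x)\Phi(x,\cdot)(i)=K(x,i)$ together with $Q^\varepsilon=\varepsilon^{-1}Q+\varepsilon^{-1/2}\tilde Q$, this produces the identity
\begin{equation*}
\frac{1}{\sqrt\varepsilon}\int_0^t K(X^\varepsilon_s,\alpha^\varepsilon_s)\,ds = \sqrt\varepsilon\,\Phi(x,\alpha)-\sqrt\varepsilon\,\Phi(X^\varepsilon_t,\alpha^\varepsilon_t)+\int_0^t\bigl[\partial_x\Phi_K+F\bigr](X^\varepsilon_s,\alpha^\varepsilon_s)\,ds+R^\varepsilon_t+M^\varepsilon_t,
\end{equation*}
where $R^\varepsilon_t$ is a remainder of order $\sqrt\varepsilon$ and $M^\varepsilon_t$ is the sum of a Brownian martingale of size $\sqrt\varepsilon$ and the compensated jump martingale of $\sqrt\varepsilon\,\Phi(X^\varepsilon,\alpha^\varepsilon)$. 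Substituting into \eref{Mainequation} rewrites $X^\varepsilon_t$ as its initial value plus a drift with coefficient $B$, the Brownian integral $\int_0^t\sigma\,dW$, a boundary term of order $\sqrt\varepsilon$, and $M^\varepsilon_t$. Because $B$ has linear growth by the bounds in \eref{EPS}, and the predictable quadratic variation of $M^\varepsilon_t$ is $O(1)$ (the jumps of $\sqrt\varepsilon\,\Phi$ are $O(\sqrt\varepsilon)$ while occurring at rate $O(\varepsilon^{-1})$), standard moment and Aldous-type estimates furnish tightness in $\mathbb{D}([0,T];\mathbb{R}^n)$.

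To identify every subsequential limit I would use the perturbed test function method. For $f\in C^2_b(\mathbb{R}^n)$ set $f^\varepsilon(x,i):=f(x)+\sqrt\varepsilon\,\langle\Phi(x,i),\nabla f(x)\rangle$. A direct expansion of $\mathcal L^\varepsilon f^\varepsilon$ shows that the singular piece $\varepsilon^{-1/2}K\cdot\nabla f$ arising from the slow generator is cancelled by $\sqrt\varepsilon\,\varepsilon^{-1}Q\Phi\cdot\nabla f=-\varepsilon^{-1/2}K\cdot\nabla f$; the mixed contribution $\sqrt\varepsilon\,\varepsilon^{-1/2}K\cdot\nabla_x(\Phi\cdot\nabla f)$ reproduces $\partial_x\Phi_K\cdot\nabla f+\mathrm{tr}((K\otimes\Phi)\nabla^2 f)$, while $\sqrt\varepsilon\,\varepsilon^{-1/2}\tilde Q\Phi\cdot\nabla f$ gives $F\cdot\nabla f$. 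Symmetrising the second-order part against $\nabla^2 f$ then yields
\begin{equation*}
\mathcal L^\varepsilon f^\varepsilon(x,i) = L^{\mathrm{eff}}f(x,i) + O(\sqrt\varepsilon),\qquad L^{\mathrm{eff}}f(x,i):=B(x,i)\cdot\nabla f(x)+\tfrac12\,\mathrm{tr}\bigl(\Sigma(x,i)\nabla^2 f(x)\bigr).
\end{equation*}
Writing the Dynkin martingale of $f^\varepsilon$, averaging the centered integrand $L^{\mathrm{eff}}f(\cdot,\alpha^\varepsilon)-\bar L f(\cdot)$ by means of a second Poisson equation, and passing to the limit $\varepsilon\to 0$ then shows that any weak subsequential limit $X$ satisfies: $f(X_t)-f(X_0)-\int_0^t\bar L f(X_s)\,ds$ is a martingale. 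Uniqueness of this martingale problem (established in Section 3) forces $X=\bar X$ in distribution, and tightness of the full family concludes the argument.

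The step I expect to be most delicate is the bookkeeping that yields the correct diffusion matrix $\bar\Sigma$. The contribution $(K\otimes\Phi)+(K\otimes\Phi)^*$ is invisible if one na\"ively applies It\^o to $f(X^\varepsilon)$; it only emerges through the corrector $\sqrt\varepsilon\,\Phi\cdot\nabla f$, either via the cross terms between $\varepsilon^{-1/2}K$ and $\nabla_x\Phi$, or equivalently as the limit of the predictable quadratic variation of the compensated jump martingale of $\sqrt\varepsilon\,\Phi(X^\varepsilon,\alpha^\varepsilon)$, via the classical Markov chain identity $\sum_{i\in\mathbb{S}}\mu_i^x\sum_{j\neq i}(\Phi(x,j)-\Phi(x,i))(\Phi(x,j)-\Phi(x,i))^*Q_{ij}(x)=\overline{(K\otimes\Phi)+(K\otimes\Phi)^*}(x)$. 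Matching these two viewpoints, and handling the countably infinite state case $m_0=\infty$, in which the linear growth bound \eref{FinteA} is essential for controlling the jump-martingale contributions and higher moments of $X^\varepsilon$, is the most subtle part of the proof; once this is done, the averaging of the centered remainders via a second Poisson equation is routine and uniqueness of the limiting martingale problem closes the argument.
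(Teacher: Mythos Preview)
Your proposal is correct and follows essentially the same route as the paper: tightness in $\mathbb{D}([0,T];\mathbb{R}^n)$ via the Poisson-equation decomposition of $\varepsilon^{-1/2}\int K\,ds$ and an Aldous-type criterion, identification of the limit through the corrector $\sqrt{\varepsilon}\,\Phi\!\cdot\!\nabla f$ (the paper writes this as an integration-by-parts computation for $\langle\Phi(X^{\varepsilon},\alpha^{\varepsilon}),\nabla U(X^{\varepsilon})\rangle$, which is the same perturbed-test-function expansion unpacked), and a second Poisson equation to kill the centered remainder $L^{\mathrm{eff}}f-\bar L f$. Two small points where the paper is more careful than your sketch: it takes test functions $U\in C^4_b$ because the corrector computation produces $\nabla^3 U$ terms, and in the ``second Poisson equation'' step it mollifies the auxiliary solution before applying It\^o's formula, since the centered integrand is only Lipschitz in $x$.
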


\begin{remark}
The well-posedness of Eq.\eref{bar} will be proved in Lemma \ref{PMbarX2} below. Roughly speaking, the homogenization $K/\!{\sqrt{\varepsilon}}$ will contribute to an additional diffusion $\overline{K\! \otimes\! \Phi\!+\! (K\! \otimes\! \Phi)^*} $ and drift $\overline{\partial_x \Phi_{K}}$.  Furthermore,  the combination of the homogenization with $\varepsilon^{-1/2}\tilde{Q}$ within $Q^{\varepsilon}$ will contribute to an additional drift $\bar{F}$. Nevertheless, if we consider $Q^{\varepsilon}(x)=\varepsilon^{-1}Q(x)+\varepsilon^{-\beta}\tilde{Q}(x)$ for some $\beta \in [0,1/2)$, the additional drift $\bar{F}$ in the limiting Eq.\eref{bar} vanishes.
\end{remark}

Next, we will further investigate the convergence rate of the weak convergence of $X^{\varepsilon}_t$ to $\bar{X}_t$ for $t \le T$, i.e., for appropriate test functions $\phi$, we will study the order of convergence for $\left|\mathbb{E}\phi(X^{\varepsilon}_t) - \mathbb{E}\phi(\bar{X}_t)\right|$. To obtain the optimal weak convergence order, we require an additional condition as follows:

\begin{conditionA}\label{A4} Assume $K \in C^4_b(\mathbb{R}^n \times \mathbb{S}, \mathbb{R}^{n})$, $\sigma \in C^4_b(\mathbb{R}^n \times \mathbb{S}, \mathbb{R}^{n} \times \mathbb{R}^d)$, $b \in C^4(\mathbb{R}^n \times \mathbb{S}, \mathbb{R}^{n})$, and $Q, \tilde{Q} \in C^4(\mathbb{R}^n, \mathbb{R}^{m_0} \otimes \mathbb{R}^{m_0})$. Moreover,
\begin{align*}
\sup_{x \in \mathbb{R}^n} \sum_{j=1}^4 \left[ \|\partial^j_{x}{b(x,i)}\|_{\infty} + \|\nabla^j{Q(x)}\|_{\ell} +\left \|\nabla^j{\tilde{Q}(x)}\right\|_{\ell} \right] \leq C.
\end{align*}
\end{conditionA}

\begin{remark}
Using the same argument as in \cite[Remark 4.6]{SX2023+},  the condition \ref{A4} and uniform non-degeneracy condition \eref{NonD} ensure $\bar B \in C^4(\mathbb{R}^n \times \mathbb{S}, \mathbb{R}^{n})$,  $\bar\Sigma^{1/2} \in C^4(\mathbb{R}^n \times \mathbb{S}, \mathbb{R}^{n} \otimes \mathbb{R}^n)$. Moreover,
\begin{align}
\sup_{x \in \mathbb{R}^n} \sum_{j=1}^4 \left[ \left\|\nabla^j{\bar B(x)}\right\| + \left\|\nabla^j{\bar\Sigma^{1/2}(x)}\right\| \right] \leq C.\label{RE2.4}
\end{align}
\end{remark}

\vspace{0.1cm}
Below, we present the second main result of this paper:
\begin{theorem}\label{main result 2}
    Suppose that conditions \ref{A1}-\ref{A4} hold. Then, for initial values $(x,\alpha) \in \mathbb{R}^n \times \mathbb{S}$, $T > 0$, and $\phi \in C^4_p(\mathbb{R}^n)$, there exists $k>0$ such that
    \begin{eqnarray}
    \sup_{0\leq t\leq T}\left|\mathbb{E} \phi(X^{\varepsilon}_t) - \mathbb{E} \phi(\bar{X}_t)\right| \leq C_{T,\phi}\left(1 + |x|^{k}\right)\varepsilon^{\frac{1}{2}}, \quad \forall \varepsilon \in (0,1],\label{AVWO}
    \end{eqnarray}
    where $C_{T,\phi}$ depending on $T$ and $\phi$, and $\bar{X}$ is the solution to Equation \eref{bar}.
\end{theorem}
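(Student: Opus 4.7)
The plan is to combine the Poisson equation (Proposition~\ref{Poisson}) with the backward Kolmogorov equation for \eref{bar}. Fix $t\in[0,T]$ and set $u(s,x):=\mathbb{E}_x\phi(\bar X_{t-s})$ for $s\in[0,t]$. Under Assumption~\ref{A4}, the bounds \eref{RE2.4} imply $u\in C^{1,4}([0,t]\times\RR^n)$ with $|\partial_x^j u(s,x)|\leq C_{T,\phi}(1+|x|^{k})$ uniformly in $s$ for $0\leq j\leq 4$, and $\partial_s u+\bar{\mathcal L}u=0$ with $\bar{\mathcal L}u=\bar B\cdot\nabla u+\tfrac12\Tr(\bar\Sigma\nabla^2 u)$. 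Applying It\^o's formula to $u(s,X^\varepsilon_s)$ on $[0,t]$ and taking expectation reduces the problem to estimating
\begin{equation*}
\mathbb{E}\phi(X^\varepsilon_t)-\mathbb{E}\phi(\bar X_t)=\mathbb{E}\int_0^t\bigl[\mathcal L^\varepsilon_x u-\bar{\mathcal L}u\bigr](s,X^\varepsilon_s,\alpha^\varepsilon_s)\,ds,
\end{equation*}
where $\mathcal L^\varepsilon_x u:=(\varepsilon^{-1/2}K+b)\cdot\nabla u+\tfrac12\Tr(\sigma\sigma^*\nabla^2 u)$; the switching part of the generator of $(X^\varepsilon,\alpha^\varepsilon)$ vanishes on $u$ because $u$ does not depend on $i$.

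We decompose the integrand as $I_1+I_2+I_3+I_4+I_5$, with $I_1:=\varepsilon^{-1/2}K\cdot\nabla u$ singular, $I_2:=(b-\bar b)\cdot\nabla u$ and $I_3:=\tfrac12\Tr\bigl((\sigma\sigma^*-\overline{\sigma\sigma^*})\nabla^2 u\bigr)$ centered in $i$ against $\mu^x$, and $I_4:=-(\overline{\partial_x\Phi_K}+\bar F)\cdot\nabla u$, $I_5:=-\tfrac12\Tr\bigl(\overline{K\otimes\Phi+(K\otimes\Phi)^*}\,\nabla^2 u\bigr)$ accounting for the gap between $(\bar b,\overline{\sigma\sigma^*})$ and $(\bar B,\bar\Sigma)$ dictated by \eref{DN}. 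To eliminate $I_1$ we introduce the auxiliary function $\psi(s,x,i):=\langle\Phi(x,i),\nabla u(s,x)\rangle$, where $\Phi$ solves the Poisson equation \eref{PE1}, and apply It\^o's formula to $\sqrt\varepsilon\,\psi(s,X^\varepsilon_s,\alpha^\varepsilon_s)$. Since $\varepsilon^{-1}Q(x)\psi(s,x,\cdot)(i)=-\varepsilon^{-1}K(x,i)\cdot\nabla u(s,x)$, the $\sqrt\varepsilon$-rescaling exactly annihilates $I_1$; what remains is a boundary term of order $\sqrt\varepsilon$ (controlled by \eref{EPS} together with the polynomial growth of $u$), an $O(\sqrt\varepsilon)$ integral from $\sqrt\varepsilon(\partial_s\psi+b\cdot\nabla_x\psi+\tfrac12\Tr(\sigma\sigma^*\nabla_x^2\psi))$, and $O(1)$ residues $\partial_x\Phi_K\cdot\nabla u+\tfrac12\Tr\bigl((K\otimes\Phi+(K\otimes\Phi)^*)\nabla^2 u\bigr)$ (arising from $K\cdot\nabla_x\psi$) and $F\cdot\nabla u$ (arising from $\tilde Q\psi$).

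The crucial observation is that these $O(1)$ residues combine with $I_4$ and $I_5$ to yield three integrands that are centered in $i$ with respect to $\mu^x$, namely $(\partial_x\Phi_K-\overline{\partial_x\Phi_K})\cdot\nabla u$, $(F-\bar F)\cdot\nabla u$, and $\tfrac12\Tr\bigl([K\otimes\Phi+(K\otimes\Phi)^*-\overline{K\otimes\Phi+(K\otimes\Phi)^*}]\nabla^2 u\bigr)$. Together with $I_2$ and $I_3$ we are thus reduced to bounding $\mathbb E\int_0^t(g-\bar g)(X^\varepsilon_s,\alpha^\varepsilon_s)\,ds$ for a finite list of centered functions $g$. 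For each such $g$, Proposition~\ref{Poisson} supplies a solution $h$ of $-Q(x)h(x,\cdot)(i)=(g-\bar g)(x,i)$ with polynomially growing $C^2$-bounds; applying It\^o's formula to $\varepsilon\,h(X^\varepsilon_s,\alpha^\varepsilon_s)$ and using $\varepsilon\cdot\varepsilon^{-1}Qh=-(g-\bar g)$ yields
\begin{equation*}
\mathbb E\int_0^t(g-\bar g)(X^\varepsilon_s,\alpha^\varepsilon_s)\,ds=O(\varepsilon)+O(\sqrt\varepsilon),
\end{equation*}
the $O(\sqrt\varepsilon)$ absorbing the $\sqrt\varepsilon\,K\cdot\nabla h$ and $\sqrt\varepsilon\,\tilde Q h$ terms from $\varepsilon\mathcal L^\varepsilon h$. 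Summing all contributions, invoking the a priori moment bounds on $X^\varepsilon$ from Section~3 and carrying polynomial growth in $x$ through \eref{EPS} and the bounds on $\nabla^j u$, produces \eref{AVWO} with constant $C_{T,\phi}(1+|x|^k)$, uniformly in $t\in[0,T]$ and $\varepsilon\in(0,1]$.

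The main obstacle is the two-level centering and the precise matching of terms: after the $\sqrt\varepsilon\psi$-trick removes the $\varepsilon^{-1/2}$-singularity, one has to verify that the leftover $O(1)$ pieces combine with $I_4,I_5$ to give exactly the corrections $\overline{\partial_x\Phi_K}$, $\bar F$ and $\overline{K\otimes\Phi+(K\otimes\Phi)^*}$ built into $\bar B$ and $\bar\Sigma$, so that the remainders become centered and therefore amenable to a second round of Poisson-It\^o analysis. This matching is the same mechanism that identifies \eref{bar} as the limit in Theorem~\ref{main result 1}, and once it is in place the $\sqrt\varepsilon$ rate is simply an accounting of boundary plus small-coefficient terms; optimality will then be witnessed by the explicit example promised in the abstract.
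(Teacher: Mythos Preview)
Your proposal is correct and follows essentially the same route as the paper: backward Kolmogorov equation for $\bar X$, It\^o on $u(s,X^\varepsilon_s)$, removal of the $\varepsilon^{-1/2}K\cdot\nabla u$ singularity via the corrector $\sqrt{\varepsilon}\langle\Phi,\nabla u\rangle$, and a second Poisson equation to kill the remaining centered terms $B-\bar B$ and $\Sigma-\bar\Sigma$ against $\nabla u,\nabla^2 u$. The only cosmetic difference is that the paper packages the second step by invoking (the time-dependent analogue of) Proposition~\ref{Pro4.2}, which mollifies the Poisson solution because under \ref{A1}--\ref{A3} alone it is merely Lipschitz; under \ref{A4} your direct use of Proposition~\ref{Poisson} with $C^2$-bounds on $h$ is legitimate and slightly cleaner, so no mollification is needed.
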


\begin{example} We illustrate the optimality of the convergence order $1/2$ through the following example. For simplicity, we only consider 1-D case..
\begin{eqnarray*}
dX^{\varepsilon}_t = \frac{1}{\sqrt{\varepsilon}}K(\alpha^{\varepsilon}_t)dt + dW_t,\quad X^{\varepsilon}_0 = x \in \mathbb{R},
\end{eqnarray*}
where $\alpha^{\varepsilon}$ is a Markov chain taking values in $\mathbb{S}=\{1,2\}$ with $\alpha^{\varepsilon}_0 = 1$ and generator
\[
Q^{\varepsilon} := \varepsilon^{-1}Q := \varepsilon^{-1}\left(
\begin{array}{cc}
-1 & 1 \\
1 & -1 \\
\end{array}
\right).
\]
It is easy to verify that $Q$ has a unique invariant probability measure $\mu = \left(1/2,1/2\right)$. The transition probability $P^\varepsilon_t$ of $\alpha^{\varepsilon}$ has the form:
\[
P^{\varepsilon}_t = \frac{1}{2}\left(
\begin{array}{cc}
1 + e^{-2t/\varepsilon} & 1 - e^{-2t/\varepsilon} \\
1 - e^{-2t/\varepsilon} & 1 + e^{-2t/\varepsilon} \\
\end{array}
\right).
\]
Put $K(1) = 1$ and $K(2) = -1$. It is easy to see that $K$ satisfies the "centering condition," and the Poisson equation
$$-Q\Phi(i) = K(i)$$
has a solution
$$\Phi(1) = \int^{\infty}_0 \mathbb{E}K(\alpha^{1}_{t})dt = 1/2,\quad \Phi(2) = \int^{\infty}_0 \mathbb{E}K(\alpha^{2}_{t})dt = -1/2,$$
where $\{\alpha^{i}_{t}\}_{t\geq 0}$ is a Markov chain with the generator $Q$ and initial value $\alpha^{i}_{0}=i$, $i=1,2$. Thus, the corresponding averaged equation is
\[
\bar{X}_t = x + \sqrt{2}\bar{W}_t,
\]
where $\bar{W}$ is a 1-D standard Brownian motion. Taking $\phi(x) = x$, we have
\begin{align*}
\sup_{t \in [0,T]}|\mathbb{E}\phi(X^{\varepsilon}_t) - \mathbb{E}\phi(\bar{X}_t)| &= \sup_{t \in [0,T]}|\mathbb{E}X^{\varepsilon}_t - \mathbb{E}\bar{X}_t| = \sup_{t \in [0,T]}\left|\frac{1}{\sqrt{\varepsilon}}\int^t_0 \mathbb{E}K(\alpha^{\varepsilon}_s) \, ds\right|\\
&= \sup_{t \in [0,T]}\left|\frac{1}{\sqrt{\varepsilon}}\int^t_0 \left[\mathbb{P}(\alpha^{\varepsilon}_s = 1) - \mathbb{P}(\alpha^{\varepsilon}_s = 2)\right] \, ds\right|\\
&= \sup_{t \in [0,T]} \left|\frac{1}{\sqrt{\varepsilon}}\int^t_0 \left[p^{\varepsilon}_{11}(s) - p^{\varepsilon}_{12}(s)\right] \, ds\right|\\
&= \frac{1}{\sqrt{\varepsilon}}\int^T_0 e^{-2s/\varepsilon} \, ds = \frac{\sqrt{\varepsilon}}{2}\left(1 - e^{-\frac{2T}{\varepsilon}}\right) = O(\sqrt{\varepsilon}),
\end{align*}
which shows that the weak convergence order $1/2$ in \eref{AVWO} is optimal.
\end{example}

\section{A Priori estimates and well-posedness of Eq.\eref{bar}}
In this section, we establish some preliminary results on moment estimates for the solution $(X_t^{\varepsilon}, \alpha_t^{\varepsilon})$ of the stochastic system \eref{Mainequation}, then we study the well-posedness of Eq.\eref{bar}.

\begin{lemma} \label{PMbarX2}
Suppose the conditions \ref{A1}-\ref{A3} hold. For $T > 0$ and $p \geq 2$, there exists $C_{p,T} > 0$ such that
\begin{equation}\label{5}
\mathbb{E}\left(\sup_{0\leq t\leq T}|X^\varepsilon_t|^p\right)\leq C_{p,T}\left(1+|x|^p\right).
\end{equation}
\end{lemma}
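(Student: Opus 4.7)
The main obstacle is that the drift $\varepsilon^{-1/2}K(X^\varepsilon_s,\alpha^\varepsilon_s)$ appears singular as $\varepsilon\to 0$, so one cannot apply It\^o's formula to $|X^\varepsilon|^p$ directly and hope Gronwall to close. The standard device is to cancel this singular term by a corrector built from the Poisson equation. Introduce the auxiliary process
$$Y^\varepsilon_t := X^\varepsilon_t + \sqrt{\varepsilon}\,\Phi(X^\varepsilon_t,\alpha^\varepsilon_t),$$
where $\Phi$ solves \eqref{PE1} and is supplied by Proposition \ref{Poisson}. Under \ref{A2}, \ref{A3} (using that $K$ is bounded together with \eqref{EPS}) one has $\|\Phi\|_\infty+\|\partial_x\Phi\|_\infty+\|\partial_x^2\Phi\|_\infty\leq C$. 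In particular $|X^\varepsilon_t-Y^\varepsilon_t|\leq C\sqrt{\varepsilon}$, so it is enough to bound $\mathbb{E}\sup_{s\leq T}|Y^\varepsilon_s|^p$.

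Apply the It\^o formula for Markov-modulated diffusions to $\sqrt{\varepsilon}\,\Phi(X^\varepsilon_t,\alpha^\varepsilon_t)$. The key computation is the jump-generator contribution,
$$\sqrt{\varepsilon}\,Q^\varepsilon(X^\varepsilon_t)\Phi(X^\varepsilon_t,\cdot)(\alpha^\varepsilon_t)=\varepsilon^{-1/2}Q(X^\varepsilon_t)\Phi+\tilde Q(X^\varepsilon_t)\Phi=-\varepsilon^{-1/2}K(X^\varepsilon_t,\alpha^\varepsilon_t)+\tilde Q\Phi(X^\varepsilon_t,\cdot)(\alpha^\varepsilon_t),$$
by the Poisson identity $Q\Phi=-K$. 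Adding this to $dX^\varepsilon_t$ makes the singular $\varepsilon^{-1/2}K$ disappear and gives
$$dY^\varepsilon_t=\Bigl[b+\partial_x\Phi\cdot K+\tilde Q\Phi+\sqrt{\varepsilon}\bigl(\partial_x\Phi\cdot b+\tfrac12\mathrm{tr}(\sigma\sigma^*\partial_x^2\Phi)\bigr)\Bigr](X^\varepsilon_t,\alpha^\varepsilon_t)\,dt+\sigma^Y(X^\varepsilon_t,\alpha^\varepsilon_t)\,dW_t+d\tilde M^\varepsilon_t,$$
with $\sigma^Y:=\sigma+\sqrt{\varepsilon}\,\partial_x\Phi\,\sigma$ uniformly bounded, and $\tilde M^\varepsilon$ the compensated martingale coming from the jumps of $\alpha^\varepsilon$.

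The drift of $Y^\varepsilon$ is controlled by $C(1+|X^\varepsilon_t|)$ using \eqref{ConA11}, the bounds of Proposition \ref{Poisson}, and the estimate
$$|\tilde Q\Phi(x,\cdot)(i)|=\Bigl|\sum_{j\neq i}\tilde q_{ij}(x)(\Phi(x,j)-\Phi(x,i))\Bigr|\leq 2\|\Phi\|_\infty\,\tilde A(x)\leq C(1+|x|)$$
from \eqref{FinteA}. For the jump martingale, a direct computation gives
$$\langle\tilde M^\varepsilon\rangle_t\leq 4\varepsilon\|\Phi\|_\infty^2\int_0^t\sum_{j\neq\alpha^\varepsilon_s}Q^\varepsilon_{\alpha^\varepsilon_s,j}(X^\varepsilon_s)\,ds\leq C\int_0^t\bigl(A+\sqrt{\varepsilon}\,\tilde A\bigr)(X^\varepsilon_s)\,ds\leq C\int_0^t(1+|X^\varepsilon_s|)\,ds,$$
where the cancellations $\varepsilon\cdot\varepsilon^{-1}=1$ and $\varepsilon\cdot\varepsilon^{-1/2}\leq 1$ wipe out the apparent $\varepsilon$-blow-up.

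Raising to the $p$-th power and applying H\"older to the drift integral together with the Burkholder--Davis--Gundy inequality to both $\int\sigma^Y\,dW$ and $\tilde M^\varepsilon$, and then substituting $|X^\varepsilon_s|\leq|Y^\varepsilon_s|+C$, one arrives at
$$\mathbb{E}\sup_{s\leq t}|Y^\varepsilon_s|^p\leq C_{p,T}(1+|x|^p)+C_{p,T}\int_0^t\mathbb{E}\sup_{r\leq s}|Y^\varepsilon_r|^p\,ds,$$
and Gronwall's lemma yields the uniform bound for $Y^\varepsilon$, which transfers to $X^\varepsilon$ through $|X^\varepsilon|\leq|Y^\varepsilon|+C$. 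The most delicate point is the jump-martingale estimate in the countable-state case $m_0=\infty$: hypothesis \eqref{FinteA} is precisely what controls the infinite sum of jump rates uniformly in $x$ with at most linear growth, and it is the pairing of the $\sqrt{\varepsilon}$ corrector with the $\varepsilon^{-1}$ scale of $Q^\varepsilon$ that keeps the whole estimate uniform in $\varepsilon$.
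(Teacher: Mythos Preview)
Your argument is correct and follows essentially the same strategy as the paper: both cancel the singular $\varepsilon^{-1/2}K$ via the Poisson corrector $\Phi$, apply It\^o's formula, control the remaining drift through the linear-growth bounds \eqref{ConA11} and \eqref{FinteA}, estimate the compensated jump martingale (the paper invokes Kunita's first inequality here---this is precisely what your ``BDG for $\tilde M^\varepsilon$'' needs when $p>2$, since the predictable bracket alone does not suffice), and close with Gronwall. The only difference is packaging: you work directly with the corrected process $Y^\varepsilon=X^\varepsilon+\sqrt\varepsilon\,\Phi$ and derive its SDE, whereas the paper organizes the same computation in two steps---first bounding $\varepsilon^{-p/2}\,\mathbb{E}\sup_t\bigl|\int_0^t K\,ds\bigr|^p$ via It\^o applied to $\Phi$ alone, then feeding this into a direct estimate of $|X^\varepsilon_t|^p$. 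The two formulations are algebraically equivalent.
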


\begin{proof}
The proof is divided into two steps.

\textbf{Step 1}: We are going to prove that, for $T > 0$ and $p \geq 2$, there exists $C_{p,T} > 0$ such that for $\varepsilon \in (0,1]$,
\begin{eqnarray}
\mathbb{E}\left[\sup_{0 \leq t \leq T}\left|\frac{1}{\sqrt{\varepsilon}}\int_0^t K(X_{s}^{\varepsilon},\alpha^{\varepsilon}_{s})ds\right|^p\right] \leq C_{p,T}\left(1 + |x|^p + \int_0^T \mathbb{E}|X^{\varepsilon}_t|^p dt\right).  \label{F3.1.1}
\end{eqnarray}

In fact, according to \cite{BBG1999}, $\alpha^{\varepsilon}$ can be expressed as the following
\begin{equation*} \label{2.3}
\alpha^{\varepsilon}_t = \alpha + \int_0^t \int_{[0, \infty)} g^{\varepsilon}(X^{\varepsilon}_{s-},\alpha^{\varepsilon}_{s-}, z)N(ds, dz),
\end{equation*}
where $N(dt, dz)$ is a Poisson random measure defined on $\Omega \times \mathscr{B}(\mathbb{R}_{+}) \times \mathscr{B}(\mathbb{R}_{+})$ with L\'evy measure being the Lebesgue measure,  $N(dt, dz)$ is independent of $W$. The function $g^{\varepsilon}(x,i, z)$ is defined as
\[
g^{\varepsilon}(x,i, z) = \sum_{j \in \mathbb{S} \backslash \{i\}} (j - i)1_{z \in \triangle^{\varepsilon}_{ij}(x)}, \quad i \in \mathbb{S},
\]
where $\triangle^{\varepsilon}_{ij}(x)$ are a sequence of consecutive (left-closed, right-open) intervals on $[0,\infty)$ arranged in lexicographical order on $\mathbb{S} \times \mathbb{S}$, each of length $\varepsilon^{-1}q_{ij}(x) + \varepsilon^{-1/2}\tilde{q}_{ij}(x)$, with $\Delta^{\varepsilon}_{12} = [0,\varepsilon^{-1}q_{12}(x) + \varepsilon^{-1/2}\tilde{q}_{12}(x))$.

For a solution $\Phi$ of Eq.\eref{PE1},  \ref{A2},  \ref{A3} and Proposition \ref{Poisson} imply $\Phi \in C^2_b(\mathbb{R}^n \times \mathbb{S}, \mathbb{R}^n)$, that is,
\begin{align}
\|\Phi(x,\cdot)\|_{\infty} + \|\partial_x\Phi(x,\cdot)\|_{\infty} + \|\partial_x^2\Phi(x,\cdot)\|_{\infty} \leq C. \label{Phi}
\end{align}
Ito's formula yields that
\begin{equation}
\begin{aligned}\label{F3.15}
\Phi(X_t^{\varepsilon},\alpha_t^{\varepsilon}) = &\Phi(x,\alpha) + \frac{1}{\sqrt{\varepsilon}}\int_0^t \partial_x\Phi_K(X_s^{\varepsilon},\alpha_s^{\varepsilon}) \, ds \\
&+ \int_0^t \partial_x\Phi(X_s^{\varepsilon},\alpha_s^{\varepsilon}) \cdot b(X_s^{\varepsilon},\alpha_s^{\varepsilon}) \, ds + \int_0^t \partial_x\Phi(X_s^{\varepsilon},\alpha_s^{\varepsilon}) \cdot \sigma(X_s^{\varepsilon},\alpha_s^{\varepsilon}) \, dW_s \\
&+ \frac{1}{2}\int_0^t \text{Tr}\Big[\partial_x^2\Phi(X_s^{\varepsilon},\alpha_s^{\varepsilon}) \cdot \sigma\sigma^\ast(X_s^{\varepsilon},\alpha_s^{\varepsilon})\Big] \, ds \\
&+ \int_0^t \int_{[0,\infty)} \left[\Phi(X_{s-}^{\varepsilon},\alpha_{s-}^{\varepsilon} + g^{\varepsilon}(X_{s-}^{\varepsilon},\alpha_{s-}^{\varepsilon},z)) - \Phi(X_{s-}^{\varepsilon},\alpha_{s-}^{\varepsilon})\right] \tilde{N}(ds,dz) \\
&+ \frac{1}{\varepsilon}\int_0^t Q(X_s^{\varepsilon})\Phi(X_s^{\varepsilon},\cdot)(\alpha_s^{\varepsilon}) \, ds + \frac{1}{\sqrt{\varepsilon}}\int_0^t F(X_s^{\varepsilon},\alpha_s^{\varepsilon}) \, ds.
\end{aligned}
\end{equation}
Thus, for any $p \geq 2$,
\begin{equation}
\begin{aligned}
&\mathbb{E}\left[\sup_{0 \leq t \leq T}\left|\int_0^t K(X_s^{\varepsilon},\alpha_s^{\varepsilon}) \, ds\right|^p\right] \\
\leq &C_p\varepsilon^p\mathbb{E}\left[\sup_{0 \leq t \leq T}\left|\Phi(x,\alpha) - \Phi(X_t^{\varepsilon},\alpha_t^{\varepsilon})\right|^p + \sup_{0 \leq t \leq T}\left|\int_0^t \partial_x\Phi(X_s^{\varepsilon},\alpha_s^{\varepsilon})\sigma(X_s^{\varepsilon},\alpha_s^{\varepsilon}) \, dW_s\right|^p\right. \\
&\quad\quad\left. + \int_0^T |\partial_x\Phi(X_s^{\varepsilon},\alpha_s^{\varepsilon})b(X_s^{\varepsilon},\alpha_s^{\varepsilon})|^p \, ds + \int_0^T \left|\text{Tr}\left[\partial_x^2\Phi(X_s^{\varepsilon},\alpha_s^{\varepsilon}) \cdot \sigma\sigma^\ast(X_s^{\varepsilon},\alpha_s^{\varepsilon})\right]\right|^p \, ds\right] \\
&+ C_p\varepsilon^p \mathbb{E} \left[\sup_{0 \leq t \leq T} \left| \int_0^t \int_{[0,\infty)} \left[\Phi(X_s^{\varepsilon},\alpha_s^{\varepsilon} + g^{\varepsilon}(X_s^{\varepsilon},\alpha_s^{\varepsilon},z)) - \Phi(X_s^{\varepsilon},\alpha_s^{\varepsilon})\right] \tilde{N}(ds,dz) \right|^p\right] \\
&+ C_p\varepsilon^\frac{p}{2}\mathbb{E}\left[\sup_{0 \leq t \leq T}\left(\left|\int_0^t \partial_x\Phi(X_s^{\varepsilon},\alpha_s^{\varepsilon})K(X_s^{\varepsilon},\alpha_s^{\varepsilon}) \, ds\right|^p + \left|\int_0^t \tilde{Q}(X_s^{\varepsilon})\Phi(X_s^{\varepsilon},\cdot)(\alpha_s^{\varepsilon}) \, ds\right|^p\right)\right] \\
=: &\sum_{i=1}^3 J_i^{\varepsilon}(T). \label{barX2}
\end{aligned}
\end{equation}

By \eref{Phi}, we have
\begin{align}
J_1^{\varepsilon}(T) \leq C_p\varepsilon^p\left(1 + \int_0^T \mathbb{E}|X_t^{\varepsilon}|^p \, dt\right) . \label{J1}
\end{align}

For $J_2^{\varepsilon}$, using Kunita's first inequality (see \cite[Theorem 4.4.23]{A2009}),
\begin{equation}
\begin{aligned}\label{J2}
J_2^{\varepsilon}(T) \leq &C_p\varepsilon^p \mathbb{E} \left[ \sup_{0 \leq t \leq T} \left| \int_0^t \int_{[0,\infty)} \left[\Phi(X_s^{\varepsilon},\alpha_s^{\varepsilon} + g^{\varepsilon}(X_s^{\varepsilon},\alpha_s^{\varepsilon},z)) - \Phi(X_s^{\varepsilon},\alpha_s^{\varepsilon})\right] \tilde{N}(ds,dz) \right|^p \right] \\
\leq &\varepsilon^p C_p \mathbb{E} \left[ \int_0^T \int_{[0,\infty)} |\Phi(X_s^{\varepsilon},\alpha_s^{\varepsilon} + g^{\varepsilon}(X_s^{\varepsilon},\alpha_s^{\varepsilon},z)) - \Phi(X_s^{\varepsilon},\alpha_s^{\varepsilon})|^2 \, dz \, ds \right]^\frac{p}{2} \\
&+ \varepsilon^p C_p \mathbb{E} \int_0^T \int_{[0,\infty)} |\Phi(X_s^{\varepsilon},\alpha_s^{\varepsilon} + g^{\varepsilon}(X_s^{\varepsilon},\alpha_s^{\varepsilon},z)) - \Phi(X_s^{\varepsilon},\alpha_s^{\varepsilon})|^p \, dz \, ds \\
\leq &\varepsilon^p C_p \mathbb{E} \left[ \int_0^T \int_{[0,A(X_s^\varepsilon)\varepsilon^{-1} + B(X_s^\varepsilon)\varepsilon^{-1/2}]} |\Phi(X_s^{\varepsilon},\alpha_s^{\varepsilon} + g^{\varepsilon}(X_s^{\varepsilon},\alpha_s^{\varepsilon},z)) - \Phi(X_s^{\varepsilon},\alpha_s^{\varepsilon})|^2 \, dz \, ds \right]^\frac{p}{2} \\
&+ \varepsilon^p C_p \mathbb{E} \int_0^T \int_{[0,A(X_s^\varepsilon)\varepsilon^{-1} + B(X_s^\varepsilon)\varepsilon^{-1/2}]} |\Phi(X_s^{\varepsilon},\alpha_s^{\varepsilon} + g^{\varepsilon}(X_s^{\varepsilon},\alpha_s^{\varepsilon},z)) - \Phi(X_s^{\varepsilon},\alpha_s^{\varepsilon})|^p \, dz \, ds \\
\leq &\varepsilon^p C_p \mathbb{E} \left[ \int_0^T\!\! \left[A(X_s^\varepsilon)\varepsilon^{-1} \!+ B(X_s^\varepsilon)\varepsilon^{-1/2}\right] \, ds \right]^\frac{p}{2}\!+ \varepsilon^p C_p \mathbb{E} \int_0^T \!\!\left[A(X_s^\varepsilon)\varepsilon^{-1} \!+ B(X_s^\varepsilon)\varepsilon^{-1/2}\right] \, ds \\
\leq &\varepsilon^\frac{p}{2} C_{p,T}\left(1 + \int_0^T \mathbb{E}|X_s^\varepsilon|^p \, ds\right).
\end{aligned}
\end{equation}

Using \ref{A2}, \ref{A3} and \eref{Phi}, we can easily obtain
\begin{align}
J_3^{\varepsilon}(T)\leq C_p\varepsilon^{p/2}+C_p\varepsilon^{p/2}\int^T_0 \mathbb{E}\|\tilde{Q}(X^{\varepsilon}_t)\|^p_{\ell} \, dt
\leq C_p\varepsilon^{p/2}\left(1+\int^T_0 \mathbb{E}|X^{\varepsilon}_t|^p \, dt\right).\label{J3}
\end{align}

Therefore,  \eref{J1}-\eref{J3} yield that \eref{F3.1.1} holds.

\textbf{Step 2}: For  $p \geq 2$ and $s \in [0,T]$, using  $C_r$ inequality, we have
\begin{align}
|X_s^\varepsilon|^p \leq &C_p|x|^p + \frac{C_p}{\varepsilon^{p/2}}\left|\int_0^s K(X_{r}^{\varepsilon},\alpha^{\varepsilon}_{r})dr\right|^p + C_p\left|\int_0^s b(X_{r}^{\varepsilon},\alpha^{\varepsilon}_{r}) dr\right|^p \nonumber\\
&+ C_p\left|\int_0^s \sigma(X_{r}^{\varepsilon},\alpha^{\varepsilon}_{r})dW_r\right|^p. \label{ROS}
\end{align}
Using \eref{ROS}, \eref{F3.1.1},  Young's inequality and BDG inequality, we can obtain that, for $t \in [0,T]$,
\begin{align*}
\mathbb{E}\left(\sup_{0\leq s\leq t}|X_s^\varepsilon|^p\right)\leq C_{p,T}(1+|x|^p) + C_{p,T}\mathbb{E}\int_0^t |X_{s}^{\varepsilon}|^p \, ds
+ C_{p,T}\mathbb{E}\int_0^t |X_{s}^{\varepsilon}|^p \, ds.
\end{align*}
Gronwall's inequality yields
\begin{align*}
\mathbb{E}\left(\sup_{0\leq t\leq T}|X^\varepsilon_t|^p\right)\leq C_{p,T}\left(1+|x|^p\right).
\end{align*}
The proof is complete.
\end{proof}

We will study the well-posedness of Eq.\eref{bar} in the followings.

\begin{lemma} \label{PMbarX2}
Suppose the conditions \ref{A1}-\ref{A3} hold, then Eq.\eref{bar} has a unique strong solution $\bar{X}$, and for any $T > 0$ and $p > 0$, there exists $C_{p,T} > 0$ such that
\begin{equation}
\mathbb{E}\left(\sup_{0 \leq t \leq T}| \bar{X}_t|^{p}\right)\le C_{p,T}(1+|x|^p).  \label{barX2}
\end{equation}
\end{lemma}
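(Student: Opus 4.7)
The plan is to establish well-posedness by verifying that the drift $\bar B$ and diffusion $\bar\Sigma^{1/2}$ satisfy the standard Lipschitz-and-linear-growth conditions, then invoke classical strong well-posedness of SDEs. The moment estimate will then follow by applying It\^o's formula to $(1+|\bar X_t|^2)^{p/2}$, using the BDG inequality and Gronwall's lemma; the case $p\in(0,2)$ follows from $p\geq 2$ by H\"older's inequality, so it suffices to prove \eqref{barX2} for $p\geq 2$.

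The regularity of the coefficients reduces to two inputs. First, by Proposition \ref{Poisson} together with Conditions \ref{A2}-\ref{A3}, the solution $\Phi$ of the Poisson equation \eqref{PE1} satisfies $\|\Phi(x,\cdot)\|_\infty+\|\partial_x\Phi(x,\cdot)\|_\infty\leq C$ uniformly in $x$, as already used in \eqref{Phi}. Combining this with the boundedness of $K$, the Lipschitz continuity of $b,\sigma$, and the bound $\|\nabla Q(x)\|_\ell+\|\nabla\tilde Q(x)\|_\ell\leq C$ from \eqref{LipQ}, one checks that each of the building blocks $(K\otimes\Phi)$, $\partial_x\Phi_K$, $F=\tilde Q\Phi$, $\sigma\sigma^*$ is bounded and Lipschitz in $x$ uniformly in $j\in\mathbb{S}$, and that $b(x,\cdot)$ has at most linear growth. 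Second, the invariant measure $x\mapsto\mu^x$ is Lipschitz in $x$ with bounded derivative: this is a consequence of the uniform ergodicity \eqref{ergodicity} together with the Lipschitz continuity of $Q$ via the Poisson equation representation in \cite{SX2023+}. Averaging against $\mu^x$ then gives that $\bar B$ is Lipschitz with linear growth and $\bar\Sigma$ is bounded and Lipschitz.

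The main obstacle is passing from Lipschitz continuity of $\bar\Sigma$ to Lipschitz continuity of $\bar\Sigma^{1/2}$, since the principal square root of a merely PSD matrix is in general only H\"older-$1/2$. Here the uniform non-degeneracy \eqref{NonD} rescues us: since $\overline{\sigma\sigma^*}(x)\succeq\lambda_0 I$ for some $\lambda_0>0$, and the symmetric tensor $\overline{(K\otimes\Phi)+(K\otimes\Phi)^*}$ is positive semi-definite (it coincides with the asymptotic covariance $\lim_{T\to\infty}T^{-1}\mathbb{E}\bigl[\int_0^T\!K(x,\alpha^x_s)ds\bigr]\bigl[\int_0^T\!K(x,\alpha^x_s)ds\bigr]^*$ by the Poisson equation identity), one obtains $\bar\Sigma(x)\succeq\lambda_0 I$ uniformly in $x$. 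Functional calculus then yields Lipschitz continuity of $\bar\Sigma^{1/2}$ with constant controlled by $\lambda_0^{-1/2}$ times the Lipschitz constant of $\bar\Sigma$. With $\bar B$ and $\bar\Sigma^{1/2}$ both Lipschitz with at most linear growth, standard Picard iteration gives existence and uniqueness of a strong solution $\bar X$, and It\^o's formula applied to $(1+|\bar X_t|^2)^{p/2}$ together with BDG and Gronwall yields \eqref{barX2}.
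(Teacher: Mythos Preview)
Your approach is the same as the paper's: obtain regularity of $\Phi$ from Proposition~\ref{Poisson}, deduce Lipschitz/growth properties of the pre-averaged coefficients $B,\Sigma$, pass to $\bar B,\bar\Sigma$ via Lipschitz dependence of $\mu^x$ on $x$, and then use uniform ellipticity of $\bar\Sigma$ (non-degeneracy of $\overline{\sigma\sigma^*}$ plus positive semi-definiteness of $\overline{(K\otimes\Phi)+(K\otimes\Phi)^*}$) to conclude that $\bar\Sigma^{1/2}$ is Lipschitz. The paper cites \cite{PTW2012} for the PSD fact and \cite{PW2006} for the square-root Lipschitz estimate, which matches your covariance and functional-calculus arguments.

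One correction is needed. You assert that $F=\tilde Q\Phi$ is bounded and globally Lipschitz, but condition~\ref{A2}(iii) only bounds $\|\nabla\tilde Q(x)\|_\ell$, while \eqref{FinteA} allows $\|\tilde Q(x)\|_\ell\leq C(1+|x|)$. Hence $\|F(x,\cdot)\|_\infty\leq C(1+|x|)$, and from the product rule the term $\tilde Q(y)\bigl(\Phi(x,\cdot)-\Phi(y,\cdot)\bigr)$ gives
\[
\|F(x,\cdot)-F(y,\cdot)\|_\infty\leq C(1+|x|+|y|)\,|x-y|.
\]
Therefore $B$ and $\bar B$ are only \emph{locally} Lipschitz with a linearly growing constant, exactly as in the paper's \eqref{Lipbarb}; they are not globally Lipschitz. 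Strong well-posedness and the moment bound \eqref{barX2} still follow (local Lipschitz plus linear growth on the drift, bounded globally Lipschitz diffusion), but you should appeal to the standard localisation-plus-growth argument rather than plain Picard iteration.
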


\begin{proof}
We only need to prove
\begin{align}\label{Lipbarb}\begin{split}
&|\bar B(x)-\bar B(y)|\leq C(1+|x|+|y|)|x-y|,\quad |\bar B(x)|\leq C(1+|x|),\\
&\|\bar{\Sigma}^{1/2}(x)-\bar{\Sigma}^{1/2}(y)\|\leq C|x-y|.
\end{split}
\end{align}
It is easy to conclude that Eq. \eref{bar} has a unique strong solution and tthat estimate \eref{barX2} holds.

Indeed, from the definition of $F$,  \ref{A2}, \eref{LipQ} and  \eref{Phi} imply that, for any $x,y\in\RR^n$,
\begin{align*}
&\left\|F(x,\cdot)-F(y,\cdot)\right\|_\infty\leq C(1+|x|+|y|)|x-y|,\quad \left\|F(x,\cdot)\right\|_\infty\leq C(1+|x|).\\
&\left\|\partial_{x}\Phi_K(x,\cdot)-\partial_{x}\Phi_K(y,\cdot)\right\|_\infty\leq C|x-y|,\quad \left\|\partial_{x}\Phi_K(x,\cdot)\right\|_\infty\leq C.
\end{align*}
Thus it follows
\begin{align*}
\left\|B(x,\cdot)-B(y,\cdot)\right\|_\infty\leq C(1+|x|+|y|)|x-y|,\quad \left\|B(x,\cdot)\right\|_\infty\leq C(1+|x|).
\end{align*}
Therefore,
\begin{align}
|\bar{B}(x_1)-\bar{B}(x_2)|=&\left|\sum_{j\in \mathbb{S}}(B(x_1,j)\mu^{x_1}_j)-\sum_{j\in \mathbb{S}}(B(x_2,j)\mu^{x_1}_j)\right|\nonumber\\
\leq&\left|\sum_{j\in \mathbb{S}}(B(x_1,j)\mu^{x_1}_j)-\mathbb{E}B(x_1,\alpha^{x_1,i}_t)\right|+\left|\sum_{j\in \mathbb{S}}(B(x_2,j)\mu^{x_2}_j)-\mathbb{E}B(x_2,\alpha^{x_2,i}_t)\right|\nonumber\\
&+\left|\mathbb{E}B(x_1,\alpha^{x_1,i}_t)-\mathbb{E}B(x_2,\alpha^{x_2,i}_t)\right|\nonumber\\
\leq&\|B(x_1,\cdot)\|_{\infty}\|p^{x_1}_{i\cdot}-\mu^{x_1}\|_{\text{var}}|x_1-x_2|
+\|B(x_2,\cdot)\|_{\infty}\|p^{x_2}_{i\cdot}-\mu^{x_2}\|_{\text{var}}|x_1-x_2|\nonumber\\
&+\left|\mathbb{E}B(x_1,\alpha^{x_1,i}_t)-\mathbb{E}B(x_2,\alpha^{x_1,i}_t)\right|
+\left|\mathbb{E}B(x_2,\alpha^{x_1,i}_t)-\mathbb{E}B(x_2,\alpha^{x_2,i}_t)\right|\nonumber\\
\leq&C(1+|x_1|+|x_2|)e^{-\lambda t}|x_1-x_2|+C(1+|x_1|+|x_2|)|x_1-x_2|\nonumber\\
&+\left|\mathbb{E}B(x_2,\alpha^{x_1,i}_t)-\mathbb{E}B(x_2,\alpha^{x_2,i}_t)\right|.\label{F3.12}
\end{align}
\cite[Proposition 3.1]{SX2023+} implies that $P^x_t$ is differentiable with respect to $x$, and
\begin{align*}
\sup_{t \geq 0, i \in \mathbb{S}}\|\partial_{x}P^{x}_tf(i)\|\leq C\|\partial_{x}Q(x)\|_{\ell},
\end{align*}
which implies
\begin{align}
\left|\mathbb{E}B(x_2,\alpha^{x_1,i}_t)-\mathbb{E}B(x_2,\alpha^{x_2,i}_t)\right|
=&\left|P^{x_1}_t B(x_2,\cdot)(i)-P^{x_2}_t B(x_2,\cdot)(i)\right|\nonumber\\
\leq&C\|B(x_2,\cdot)\|_{\infty}|x_1-x_2|\leq C(1+|x_2|)|x_1-x_2|.\label{F3.13}
\end{align}
Thus using \eref{F3.13} and letting $t\rightarrow \infty$ in \eref{F3.12}, it is easy to see the first estimate of \eref{Lipbarb} holds.

Next, we prove that the second estimate of \eref{Lipbarb} holds. Note that by \ref{A1}, \ref{A3} and  \eref{Phi}, we have
$$
\|\Sigma(x,\cdot)-\Sigma(y,\cdot)\|_\infty\leq C|x-y|, \quad \sup_{x\in\mathbb{R}^n}\|\Sigma(x,\cdot)\|_\infty\leq C.
$$
Thus, by the similar argument above, we can obtain
$$
\left\|\bar{\Sigma}(x)-\bar{\Sigma}(y)\right\|\leq C|x-y|.
$$

From \eref{NonD}, we know that $\overline{\sigma\sigma^*}$ is uniformly non-degenerate, i.e.,
\begin{equation*}
\inf_{x\in\mathbb{R}^n,z\in\mathbb{R}^n\backslash\{0\}}\frac{\langle\overline{\sigma\sigma^*}(x)\cdot z,z\rangle}{|z|^2}>0.
\end{equation*}
Therefore, for any $x\in\mathbb{R}^n$, $\overline{\sigma\sigma^*}(x)$ is a symmetric positive-definite matrix. According to \cite[Lemma 4.2.4]{PTW2012}, the matrix $\overline{K\otimes\Phi+(K\otimes\Phi)^*}(x)$ is a symmetric non-negative definite matrix. Thus, $\bar{\Sigma}(x)$ is a symmetric positive-definite matrix and satisfies
\begin{equation*}
\inf_{x\in\mathbb{R}^n,z\in\mathbb{R}^n\backslash\{0\}}\frac{\langle\bar{\Sigma}(x)\cdot z,z\rangle}{|z|^2}>0.
\end{equation*}
Therefore, all eigenvalues of $\bar{\Sigma}(x)$ are strictly greater than some constant $\gamma > 0$ that does not depend on $x$. Then, according to the following fact (see \cite{PW2006}): let $A$ and $B$ be two $n \times n$ symmetric positive definite matrices with all eigenvalues greater than or equal to $\gamma > 0$, then it holds that
\[
\|A^{1/2} - B^{1/2}\| \leq \frac{1}{2\gamma}\|A - B\|.
\]
Utilizing the aforementioned fact, we can conclude that $\bar{\Sigma}^{1/2}(x)$ is Lipschitz,  i.e.,
\[
\left\|\bar{\Sigma}^{1/2}(x) - \bar{\Sigma}^{1/2}(y)\right\| \leq C|x - y|.
\]
This completes the proof.
\end{proof}

\section{Weak Convergence of $X^{\varepsilon}$ in $C([0,T],\mathbb{R}^n)$}

In this section, we will prove the weak convergence of $X^{\varepsilon}$ in $C([0,T],\mathbb{R}^n)$.  We first study the tightness of $\{X^{\varepsilon}\}$ in $C([0,T];\mathbb{R}^n)$,  then we give the proof of Theorem 2.3.

\subsection{Tightness of $\{X^{\varepsilon}\}$ in $C([0,T];\mathbb{R}^n)$}

\begin{proposition}\label{pro4.6}
If conditions A1-A3 hold, then for $T > 0$, the distribution of $\{X^\varepsilon\}_{\varepsilon\in(0,1]}$ is tight in $C([0,T];\mathbb{R}^n)$ .
\end{proposition}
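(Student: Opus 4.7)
The plan is to trade the singular drift $K/\sqrt{\varepsilon}$ for a pointwise-small perturbation via the Poisson equation, apply Aldous's criterion to the resulting modified process, and then descend to $\{X^\varepsilon\}$ in $C([0,T];\RR^n)$. Introduce
\[
Y^\varepsilon_t := X^\varepsilon_t + \sqrt{\varepsilon}\,\Phi(X^\varepsilon_t,\alpha^\varepsilon_t),
\]
where $\Phi$ is the solution of \eref{PE1} given by Proposition \ref{Poisson}. Applying It\^o's formula to $\Phi(X^\varepsilon_\cdot,\alpha^\varepsilon_\cdot)$ as in the derivation of \eref{F3.15}, and using $Q\Phi = -K$, yields
\[
\frac{1}{\sqrt{\varepsilon}}\int_s^t K(X^\varepsilon_r,\alpha^\varepsilon_r)\,dr = \sqrt{\varepsilon}\bigl[\Phi(X^\varepsilon_s,\alpha^\varepsilon_s) - \Phi(X^\varepsilon_t,\alpha^\varepsilon_t)\bigr] + \sum_{i=2}^{7} I_i(s,t),
\]
with $I_i(s,t)$ the analogues of the seven regular terms from the proof of Lemma 3.1, restricted to $[s,t]$. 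Plugging this into the SDE for $X^\varepsilon$, the boundary term cancels against the correction $\sqrt{\varepsilon}\Phi$ in the definition of $Y^\varepsilon$, giving
\[
Y^\varepsilon_t - Y^\varepsilon_s = \sum_{i=2}^{7} I_i(s,t) + \int_s^t b(X^\varepsilon_r,\alpha^\varepsilon_r)\,dr + \int_s^t \sigma(X^\varepsilon_r,\alpha^\varepsilon_r)\,dW_r.
\]

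The next step is a uniform second-moment increment estimate. Controlling each Lebesgue integral via H\"older, the Brownian integral via BDG, and the compensated Poisson integral $I_6$ via Kunita's first inequality exactly as in \eref{J2}, combined with the $\Phi$-estimates \eref{EPS}, the growth bounds from assumption \ref{A2}, and the a priori moment estimate \eref{5}, yields
\[
\EE\bigl|Y^\varepsilon_t - Y^\varepsilon_s\bigr|^{2} \leq C_{T}(1 + |x|^{k})\,|t-s|,\qquad \varepsilon\in(0,1],\ 0\leq s\leq t\leq T.
\]
This estimate extends to stopping times by the strong Markov property of $(X^\varepsilon,\alpha^\varepsilon)$, so Aldous's criterion (together with $Y^\varepsilon_0 = x + \sqrt{\varepsilon}\Phi(x,\alpha)$ being uniformly bounded) delivers tightness of $\{Y^\varepsilon\}_{\varepsilon\in(0,1]}$ in $\mathbb{D}([0,T];\RR^n)$.

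It remains to transfer tightness back to $X^\varepsilon$ and upgrade from $\mathbb{D}$ to $C$. Since $\sup_{t\in[0,T]}|X^\varepsilon_t - Y^\varepsilon_t| \leq \sqrt{\varepsilon}\|\Phi\|_\infty$, every jump of $Y^\varepsilon$ is bounded by $2\sqrt{\varepsilon}\|\Phi\|_\infty$. For any sequence $\varepsilon_n\to 0$ the maximal jump vanishes, so any weak $\mathbb{D}$-subsequential limit of $Y^{\varepsilon_n}$ has continuous paths almost surely; the $\mathbb{D}$-convergence to such a continuous path is then uniform, and since $X^{\varepsilon_n} - Y^{\varepsilon_n}\to 0$ in sup norm deterministically, $X^{\varepsilon_n}$ converges to the same limit in $C([0,T];\RR^n)$. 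For sequences with $\varepsilon_n$ bounded below by some $\varepsilon_0>0$, the coefficients $\varepsilon_n^{-1/2}K,b,\sigma$ are uniformly bounded and Lipschitz, and a direct Kolmogorov-Chentsov estimate with $p=4$ produces tightness of $\{X^{\varepsilon_n}\}$ in $C$. Combining the two regimes gives the claim.

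The main obstacle is that the boundary term $\sqrt{\varepsilon}[\Phi(X^\varepsilon_s,\alpha^\varepsilon_s) - \Phi(X^\varepsilon_t,\alpha^\varepsilon_t)]$ remains of order $\sqrt{\varepsilon}$ without shrinking in $|t-s|$ for fixed $\varepsilon>0$, which blocks a direct Kolmogorov-Chentsov estimate on $X^\varepsilon$ uniform in $\varepsilon$. The device of passing to $Y^\varepsilon$ circumvents this by absorbing the obstructive fluctuation into the process itself, at the harmless cost of introducing jumps whose sizes vanish as $\varepsilon\to 0$.
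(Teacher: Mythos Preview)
Your argument is correct, but it is more elaborate than necessary, and the paper takes a shorter route. The key observation you miss is that the boundary term $\sqrt{\varepsilon}\bigl[\Phi(X^\varepsilon_{S_1},\alpha^\varepsilon_{S_1})-\Phi(X^\varepsilon_{S_2},\alpha^\varepsilon_{S_2})\bigr]$, while not decaying in $|S_2-S_1|$, contributes only $C\varepsilon$ to the increment bound---and the Aldous criterion in $\mathbb{D}([0,T];\mathbb{R}^n)$ permits one to choose \emph{both} $\theta$ and $\varepsilon_0$ small. Hence the paper simply applies It\^o/Poisson directly to $X^\varepsilon$ between stopping times and obtains
\[
\mathbb{E}\bigl|X^\varepsilon_{S_2}-X^\varepsilon_{S_1}\bigr|^2 \le C_T\varepsilon(1+|x|^2) + C_T(\theta+\theta^2)(1+|x|^2),
\]
which suffices for Aldous. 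Since $X^\varepsilon$ has continuous paths, tightness in $\mathbb{D}$ immediately gives tightness in $C$, with no auxiliary process, no transfer-back step, and no splitting into the regimes $\varepsilon_n\to 0$ versus $\varepsilon_n\ge\varepsilon_0$.

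Your device of passing to $Y^\varepsilon=X^\varepsilon+\sqrt{\varepsilon}\,\Phi$ buys a genuinely $\varepsilon$-uniform bound $\mathbb{E}|Y^\varepsilon_{S_2}-Y^\varepsilon_{S_1}|^2\le C\theta$, which would be needed if one insisted on a Kolmogorov--Chentsov style criterion; the cost is the extra work of transferring tightness from $Y^\varepsilon$ back to $X^\varepsilon$ (your argument there is essentially right but could be shortened: from $|X^\varepsilon_{S_2}-X^\varepsilon_{S_1}|\le |Y^\varepsilon_{S_2}-Y^\varepsilon_{S_1}|+2\sqrt{\varepsilon}\|\Phi\|_\infty$ one recovers the paper's bound directly, making the weak-limit discussion and the two-regime split superfluous). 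A minor comment: the extension to stopping times does not really use the strong Markov property---the It\^o representation holds between bounded stopping times by standard localization, and each integral is then estimated using $S_2-S_1\le\theta$ together with the uniform moment bound \eref{5}.
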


\begin{proof}
For technical reasons, we first prove that $\{X^{\varepsilon}\}_{\varepsilon\le 1}$ is tight in $\mathbb{D}([0,T];\mathbb{R}^n)$.
Then utilizing the continuity of the paths of $X^{\varepsilon}$, we can conclude that $\{X^{\varepsilon}\}_{\varepsilon<1}$ is also tight in $C([0,T];\mathbb{R}^n)$.

According to the tightness criterion in the space $\mathbb{D}([0,T];\mathbb{R}^n)$ (see \cite[Chapter 6, Theorem 4.5]{JS2013}), it is sufficient to prove that $\{X^\varepsilon\}_{\varepsilon<1}$ satisfies the following conditions:

(i) For $\delta > 0$, there exist $\varepsilon_0 > 0$ and $\lambda > 0$ such that
\begin{equation}\label{t1}
\sup_{0<\varepsilon\leq \varepsilon_0}\mathbb{P}\left(\sup_{0\leq t\leq T}|X^\varepsilon_t|\geq \lambda\right)\leq \delta.
\end{equation}

(ii) For $\delta > 0$ and $K > 0$, there exist $\theta > 0$ and $\varepsilon_0 > 0$ such that
\begin{equation}\label{t2}
\sup_{0<\varepsilon\leq \varepsilon_0}\sup_{S_1,S_2\in \mathcal{T}^{T} , S_1\leq S_2\leq S_1+\theta}\mathbb{P}\left(|X^\varepsilon_{S_1}-X^\varepsilon_{S_2}|\geq K\right)\leq \delta,
\end{equation}
where $\mathcal{T}^{T}=\{S: S \text{ is an } \mathscr{F}_t\text{-stopping time such that } S \le T\}$. Then $\{X^{\varepsilon}\}_{\varepsilon\le 1}$ is tight in $\mathbb{D}([0,T];\mathbb{R}^n)$.

By \eref{barX2} and Chebyshev's inequality,  it is easy to prove that $\{X^\varepsilon\}$ satisfies  \eref{t1}. Next, we will verify \eref{t2}.  According to \eref{F3.15}, for any $S_1,S_2\in \mathcal{T}^{T}$ with $S_1\leq S_2\leq S_1+\theta$, we have
\begin{align*}
&\frac{1}{\sqrt{\varepsilon}}\int^{S_2}_{S_1} K(X_s^{\varepsilon},\alpha_s^{\varepsilon})ds=\frac{1}{\sqrt{\varepsilon}}\int^{S_2}_{S_1} -Q(X_s^{\varepsilon})\Phi(X_s^{\varepsilon},\cdot)(\alpha_s^{\varepsilon})ds\\
=&\sqrt{\varepsilon}\left[\Phi(X_{S_1}^{\varepsilon},\alpha_{S_1}^{\varepsilon})-\Phi(X_{S_2}^{\varepsilon},\alpha_{S_2}^{\varepsilon})+\int^{S_2}_{S_1} \partial_x\Phi(X_s^{\varepsilon},\alpha_s^{\varepsilon})b(X_s^{\varepsilon},\alpha_s^{\varepsilon})ds\right.\\
&+\left.\int^{S_2}_{S_1} \partial_x\Phi(X_s^{\varepsilon},\alpha_s^{\varepsilon})\sigma(X_s^{\varepsilon},\alpha_s^{\varepsilon})dW_s+\frac{1}{2}\int^{S_2}_{S_1} \text{Tr}\left[\partial_x^2\Phi(X_s^{\varepsilon},\alpha_s^{\varepsilon})\cdot\sigma\sigma^\ast(X_s^{\varepsilon},\alpha_s^{\varepsilon})\right]ds\right]\\
&+\sqrt{\varepsilon}\int^{S_2}_{S_1}\int_{[0,\infty)}\left[\Phi(X^{\varepsilon}_{s-},\alpha^{\varepsilon}_{s-}+g^{\varepsilon}(X^{\varepsilon}_{s-},\alpha^{\varepsilon}_{s-},z))-\Phi(X^{\varepsilon}_{s-},\alpha^{\varepsilon}_{s-})\right] \tilde{N}(ds,dz)\\
&+\int^{S_2}_{S_1} \partial_x\Phi_{K}(X_s^{\varepsilon},\alpha_s^{\varepsilon})ds+\int^{S_2}_{S_1} F(X_s^{\varepsilon},\alpha_s^{\varepsilon})ds.
\end{align*}
This, \eref{barX2} and \eref{Phi} yield that
\begin{align*}
&\mathbb{E}|X_{S_1}^{\varepsilon}-X_{S_2}^{\varepsilon}|^{2}\\
\leq&\frac{C}{\varepsilon}\mathbb{E}\left|\int^{S_2}_{S_1} K(X_s^{\varepsilon},\alpha_s^{\varepsilon})ds\right|^2+ C\mathbb{E}\left|\int_{S_1}^{S_2}b(X_{s}^{\varepsilon},\alpha^{\varepsilon}_{s})ds\right|^2+C\mathbb{E}\left|\int_{S_1}^{S_2}\sigma(X_{s}^{\varepsilon},\alpha^{\varepsilon}_{s})dW_s\right|^2\\
\leq& C_{T}\varepsilon (1+|x|^2)+C\mathbb{E}\left|\int^{S_2}_{S_1} \partial_x\Phi_K(X_s^{\varepsilon},\alpha_s^{\varepsilon}) ds\right|^2+\mathbb{E}\left|\int^{S_2}_{S_1} F(X_s^{\varepsilon},\alpha_s^{\varepsilon})ds\right|^2\\
&+C\varepsilon\mathbb{E}\left|\int^{S_2}_{S_1}\int_{[0,\infty)}\left[\Phi(X^{\varepsilon}_{s-},\alpha^{\varepsilon}_{s-}+g^{\varepsilon}(X^{\varepsilon}_{s-},\alpha^{\varepsilon}_{s-},z))-\Phi(X^{\varepsilon}_{s-},\alpha^{\varepsilon}_{s-})\right] \tilde{N}(ds,dz)\right|^2\\
&+C\mathbb{E}\left|\int_{S_1}^{S_2}b(X_{s}^{\varepsilon},\alpha^{\varepsilon}_{s})ds\right|^2+C\mathbb{E}\left|\int_{S_1}^{S_2}\sigma(X_{s}^{\varepsilon},\alpha^{\varepsilon}_{s})dW_s\right|^2\\
\leq&C_{T}\varepsilon (1+|x|^2)+C_{T}(\theta^2+\theta)\left(1+|x|^2\right).
\end{align*}
Chebyshev's inequality implies that $\{X^{\varepsilon}\}$ satisfies \eref{t2}. This completes the proof.
\end{proof}

\subsection{Proof of Theorem 2.3}

We first need the follow result.

\begin{proposition}\label{Pro4.2}
Suppose condition \ref{A2} holds. Let  $H(x,\alpha):\mathbb{R}^n\times\mathbb{S}\to\RR^n$  satisfy the "centering condition" \eref{CenCon} and
\[
\|H(x,\cdot)-H(y,\cdot)\|_{\infty}\leq C(1+|x|^k+|y|^k)|x-y|,\quad \|H(x,\cdot)\|_{\infty}\leq C(1+|x|^k)
\]
for some $k>0$. Then for $0\leq t_0\leq t\leq T$, we have
\begin{eqnarray}
\mathbb{E}\left|\int_{t_0}^tH(X_{s}^{\varepsilon},\alpha^{\varepsilon}_{s})ds\right|\leq C_T(1+|x|^{k+1})\sqrt{\varepsilon}.\label{ConAV}
\end{eqnarray}
\end{proposition}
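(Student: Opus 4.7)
The plan is to mirror the Poisson-equation/Itô calculation from the proof of Lemma~3.1, replacing $K$ by the generic centered observable $H$. Because $H(x,\cdot)$ satisfies the centering condition \eref{CenCon}, Proposition~\ref{Poisson} provides
\[
\Psi(x,i):=\int_{0}^{\infty}\mathbb{E}H(x,\alpha_{t}^{x,i})\,dt,
\]
which solves $-Q(x)\Psi(x,\cdot)(i)=H(x,i)$. Combining the polynomial growth/Lipschitz hypotheses on $H$ with the uniform derivative bounds on $Q$ from \ref{A2}(iii), the estimates \eref{EPS} yield $\|\Psi(x,\cdot)\|_{\infty}+\|\partial_{x}\Psi(x,\cdot)\|_{\infty}+\|\partial_{x}^{2}\Psi(x,\cdot)\|_{\infty}\leq C(1+|x|^{k})$ (with a standard mollification of $H$ used if $H$ is not a priori $C^{2}$).

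Next, I would apply Itô's formula to $\Psi(X_{s}^{\varepsilon},\alpha_{s}^{\varepsilon})$ on $[t_{0},t]$ exactly as in \eref{F3.15}. The key observation is that $-Q(x)\Psi(x,\cdot)(i)=H(x,i)$ turns the $\tfrac{1}{\varepsilon}$-generator contribution into $-\tfrac{1}{\varepsilon}\int_{t_{0}}^{t}H(X_{s}^{\varepsilon},\alpha_{s}^{\varepsilon})\,ds$. Rearranging and multiplying through by $\varepsilon$ expresses $\int_{t_{0}}^{t}H(X_{s}^{\varepsilon},\alpha_{s}^{\varepsilon})\,ds$ as the sum of a boundary term $\varepsilon[\Psi(X_{t_{0}}^{\varepsilon},\alpha_{t_{0}}^{\varepsilon})-\Psi(X_{t}^{\varepsilon},\alpha_{t}^{\varepsilon})]$, two $\sqrt{\varepsilon}$-order drift integrals, namely $\sqrt{\varepsilon}\int_{t_{0}}^{t}\langle\partial_{x}\Psi,K\rangle\,ds$ (from the homogenization) and $\sqrt{\varepsilon}\int_{t_{0}}^{t}\tilde{Q}(X_{s}^{\varepsilon})\Psi(X_{s}^{\varepsilon},\cdot)(\alpha_{s}^{\varepsilon})\,ds$ (from the $\tilde{Q}$-piece of the generator), and four $\varepsilon$-order remainders: the $\partial_{x}\Psi\cdot b$ drift, the Itô correction $\tfrac{\varepsilon}{2}\int\text{Tr}[\partial_{x}^{2}\Psi\cdot\sigma\sigma^{*}]\,ds$, the Brownian stochastic integral $\varepsilon\int\partial_{x}\Psi\cdot\sigma\,dW_{s}$, and the compensated Poisson integral driven by $\tilde{N}$.

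Taking $\mathbb{E}|\cdot|$ on both sides and applying the moment bound \eref{5}, the boundary contribution is $\mathcal{O}(\varepsilon(1+|x|^{k}))$. The two $\sqrt{\varepsilon}$-drifts are dominant: $\|\langle\partial_{x}\Psi,K\rangle\|_{\infty}\leq C(1+|x|^{k})$ because $K\in C^{2}_{b}$, while $\|\tilde{Q}(x)\Psi(x,\cdot)\|_{\infty}\leq \tilde{A}(x)\|\Psi(x,\cdot)\|_{\infty}\leq C(1+|x|^{k+1})$ by \eref{FinteA}, and time-integration together with \eref{5} produces exactly the target rate $\sqrt{\varepsilon}C_{T}(1+|x|^{k+1})$. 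The four $\varepsilon$-order remainders are treated by BDG (for the Brownian integral) and by Kunita's first inequality for the Poisson martingale (exactly as in \eref{J2}, using the intensity bound $A(x)\varepsilon^{-1}+\tilde{A}(x)\varepsilon^{-1/2}$ from \eref{FinteA}); they are absorbed into the same bound since $\varepsilon\leq\sqrt{\varepsilon}$ on $(0,1]$. The main obstacle is the bookkeeping: carefully identifying which prefactor ($1/\varepsilon$, $1/\sqrt{\varepsilon}$, or $O(1)$) each piece of the Itô expansion carries, and using the Kunita estimate rather than a naive $L^{2}$-isometry on the Poisson martingale in order to extract the correct power of $\varepsilon$ from the singular jump intensity.
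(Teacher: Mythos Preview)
Your overall strategy---solve the Poisson equation $-Q(x)\Psi(x,\cdot)=H(x,\cdot)$ and apply It\^o's formula to $\Psi(X^{\varepsilon}_{s},\alpha^{\varepsilon}_{s})$---is exactly the paper's. The gap is in the regularity step. The hypotheses on $H$ give only local Lipschitz continuity in $x$ (with polynomial growth), not $C^{2}$ regularity; Proposition~\ref{Poisson} then delivers a $\Psi$ that is merely Lipschitz in $x$, so your claimed bound $\|\partial_{x}^{2}\Psi(x,\cdot)\|_{\infty}\leq C(1+|x|^{k})$ is unfounded and It\^o's formula cannot be applied to $\Psi$ directly. The parenthetical ``standard mollification of $H$'' does not repair this for free: if you mollify at scale $1/m$, then $\|\partial_{x}^{2}\Psi_{m}\|_{\infty}$ is only $O\bigl((1+|x|^{k})m\bigr)$, so the It\^o correction $\tfrac{\varepsilon}{2}\int\text{Tr}[\partial_{x}^{2}\Psi_{m}\,\sigma\sigma^{*}]\,ds$ is of size $\varepsilon m$, while the replacement error coming from $H-H_{m}$ (equivalently $\Psi-\Psi_{m}$) is of size $m^{-1}$. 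Neither of these is $O(\sqrt{\varepsilon})$ uniformly in $m$, so you cannot simply say ``the $\varepsilon$-order remainders are absorbed'' as you do in the last paragraph.

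The paper makes this trade-off explicit. It mollifies the Lipschitz solution $\hat\Phi$ to obtain smooth $\hat\Phi_{m}$ with $\|\hat\Phi-\hat\Phi_{m}\|_{\infty}\leq C(1+|x|^{k})m^{-1}$, $\|\partial_{x}\hat\Phi_{m}\|_{\infty}\leq C(1+|x|^{k})$ and $\|\partial_{x}^{2}\hat\Phi_{m}\|_{\infty}\leq C(1+|x|^{k})m$, splits $\int H\,ds=\int Q(\hat\Phi-\hat\Phi_{m})\,ds+\int Q\hat\Phi_{m}\,ds$ (the first piece is controlled via $\|Q(x)\|_{\ell}\leq C(1+|x|)$, which is where the extra power of $|x|$ in the final bound comes from), applies It\^o only to $\hat\Phi_{m}$, and arrives at
\[
\mathbb{E}\Bigl|\int_{t_{0}}^{t}H(X^{\varepsilon}_{s},\alpha^{\varepsilon}_{s})\,ds\Bigr|^{2}\leq C_{T}(1+|x|^{2k+2})\bigl(m^{-2}+m^{2}\varepsilon^{2}+\varepsilon\bigr).
\]
Choosing $m\sim\varepsilon^{-1/2}$ balances the competing terms and yields the $\sqrt{\varepsilon}$ rate. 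Without carrying the mollification parameter through the entire computation and optimising it at the end, your argument does not close.
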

\begin{proof}
Consider the Poisson equation:
\begin{eqnarray}
-Q(x)\hat{\Phi}(x,\cdot)(i) = H(x,i), \label{PEQ2}
\end{eqnarray}
By Proposition \ref{Poisson}, we know that \eref{PEQ2} has a solution $\hat\Phi$. Furthermore, we have
\begin{eqnarray*}
\left\|\hat\Phi(x,\cdot) - \hat\Phi(y,\cdot)\right\|_{\infty} \leq C\left(1 + |x|^k + |y|^k\right)|x - y|, \quad \left\|\hat\Phi(x,\cdot)\right\|_{\infty} \leq C\left(1 + |x|^k\right).
\end{eqnarray*}
Using mollification techniques, we can find a sequence of smooth functions $\{\hat{\Phi}_m(x,i)\}_{m\geq 1}$ with respect to $x$ such that
\begin{align}
\left\|\hat{\Phi}(x,\cdot) - \hat{\Phi}_m(x,\cdot)\right\|_\infty \leq C\left(1 + |x|^k\right) m^{-1}, \quad \left\|\hat{\Phi}_m(x,\cdot)\right\|_{\infty} \leq C\left(1 + |x|^k\right),\label{PhiApp1}
\end{align}
\begin{align}
\left\| \partial_x \hat{\Phi}_m(x,\cdot)\right\|_{\infty} \leq C\left(1 + |x|^k\right), \quad \left\| \partial^2_x \hat{\Phi}_m(x,\cdot)\right\|_{\infty} \leq C\left(1 + |x|^k\right) m.\label{PhiApp2}
\end{align}
Thus by estimate \eref{PhiApp1}, we obtain
\begin{align}
\mathbb{E}\left|\int_{t_0}^t H(X_{s}^{\varepsilon},\alpha^{\varepsilon}_{s})ds\right|^2= &\mathbb{E}\left|\int_{t_0}^t Q(X_{s}^{\varepsilon})\hat\Phi(X_{s}^{\varepsilon},\cdot)(\alpha^{\varepsilon}_{s})ds\right|^2\nonumber\\
\leq &2\mathbb{E}\left|\int_{t_0}^t \left[Q(X_{s}^{\varepsilon})\hat\Phi(X_{s}^{\varepsilon},\cdot)(\alpha^{\varepsilon}_{s}) - Q(X_{s}^{\varepsilon})\hat{\Phi}_m(X_{s}^{\varepsilon},\cdot)(\alpha^{\varepsilon}_{s})\right]ds\right|^2\nonumber\\
&+2\mathbb{E}\left|\int_{t_0}^t Q(X_{s}^{\varepsilon})\hat{\Phi}_m(X_{s}^{\varepsilon},\cdot)(\alpha^{\varepsilon}_{s})ds\right|^2\nonumber\\
\leq &C_{T}\mathbb{E}\int_{t_0}^t \|Q(X_{s}^{\varepsilon})\|^2_{\ell} \|\hat{\Phi}(X_{s}^{\varepsilon},\cdot) - \hat{\Phi}_m(X_{s}^{\varepsilon},\cdot)\|^2_{\infty} ds\nonumber\\
&+C\mathbb{E}\left|\int_{t_0}^t Q(X_{s}^{\varepsilon})\hat{\Phi}_m(X_{s}^{\varepsilon},\cdot)(\alpha^{\varepsilon}_{s})ds\right|^2\nonumber\\
\leq &C_{T}(1 + |x|^{2k+2})m^{-2} + C\mathbb{E}\left|\int_{t_0}^t Q(X_{s}^{\varepsilon})\hat{\Phi}_m(X_{s}^{\varepsilon},\cdot)(\alpha^{\varepsilon}_{s})ds\right|^2.\label{F4.19}
\end{align}
Applying It\^o's formula to the function $\hat\Phi_m$, one has
\begin{align*}
\hat{\Phi}_m(X_t^{\varepsilon},\alpha_t^{\varepsilon}) = &\hat{\Phi}_m(X_{t_0}^{\varepsilon},\alpha_{t_0}^{\varepsilon}) + \frac{1}{\sqrt{\varepsilon}}\int_{t_0}^t \partial_x\hat{\Phi}_m(X_s^{\varepsilon},\alpha_s^{\varepsilon}) \cdot K(X_s^{\varepsilon},\alpha_s^{\varepsilon})ds\\
&+ \int_{t_0}^t \partial_x\hat{\Phi}_m(X_s^{\varepsilon},\alpha_s^{\varepsilon})\cdot b(X_s^{\varepsilon},\alpha_s^{\varepsilon})ds + \int_{t_0}^t \partial_x\hat{\Phi}_m(X_s^{\varepsilon},\alpha_s^{\varepsilon})\cdot \sigma(X_s^{\varepsilon},\alpha_s^{\varepsilon})dW_s\\
&+ \frac{1}{2}\int_{t_0}^t \text{Tr}\Big[\partial_x^2\hat{\Phi}_m(X_s^{\varepsilon},\alpha_s^{\varepsilon})\cdot\sigma\sigma^\ast(X_s^{\varepsilon},\alpha_s^{\varepsilon})\Big]ds\\
&+ \int_{t_0}^t\int_{[0,\infty)}\left[\hat{\Phi}_m(X^{\varepsilon}_{s-},\alpha^{\varepsilon}_{s-}+g^{\varepsilon}(X^{\varepsilon}_{s-},\alpha^{\varepsilon}_{s-},z)) - \hat{\Phi}_m(X^{\varepsilon}_{s-},\alpha^{\varepsilon}_{s-})\right] \tilde{N}(ds,dz)\\
&+ \frac{1}{\varepsilon}\int_{t_0}^t Q(X_s^{\varepsilon})\hat{\Phi}_m(X_s^{\varepsilon},\cdot)(\alpha_s^{\varepsilon})ds + \frac{1}{\sqrt{\varepsilon}}\int^{t}_{t_0} \tilde Q(X_s^{\varepsilon})\hat{\Phi}_m(X_s^{\varepsilon},\cdot)(\alpha_s^{\varepsilon})ds.
\end{align*}
Therefore, we have
\begin{align*}
&-\int_{t_0}^t Q(X_{s}^{\varepsilon})\hat{\Phi}_m(X_{s}^{\varepsilon},\cdot)(\alpha_s^{\varepsilon})ds\nonumber\\
= &\varepsilon\Big\{\hat{\Phi}_m(X_{t_0}^{\varepsilon},\alpha_{t_0}^{\varepsilon}) - \hat{\Phi}_m(X_t^{\varepsilon},\alpha_t^{\varepsilon}) + \int_{t_0}^t \partial_x\hat{\Phi}_m(X_s^{\varepsilon},\alpha_s^{\varepsilon})\cdot b(X_s^{\varepsilon},\alpha_s^{\varepsilon})ds\\
&+ \int_{t_0}^t \partial_x\hat{\Phi}_m(X_s^{\varepsilon},\alpha_s^{\varepsilon})\cdot \sigma(X_s^{\varepsilon},\alpha_s^{\varepsilon})dW_s + \frac{1}{2}\int_{t_0}^t \text{Tr}\Big[\partial_x^2\hat{\Phi}_m(X_s^{\varepsilon},\alpha_s^{\varepsilon})\cdot\left(\sigma\sigma^\ast\right)(X_s^{\varepsilon},\alpha_s^{\varepsilon})\Big]ds\Big\}\\
&+ \sqrt{\varepsilon}\left[\int_{t_0}^t \partial_x\hat{\Phi}_m(X_s^{\varepsilon},\alpha_s^{\varepsilon}) \cdot K(X_s^{\varepsilon},\alpha_s^{\varepsilon})ds + \int_{t_0}^t \tilde Q(X_s^{\varepsilon})\hat{\Phi}_m(X_s^{\varepsilon},\cdot)(\alpha_s^{\varepsilon})ds\right]\\
&+ \varepsilon\int_{t_0}^t\int_{[0,\infty)}\left[ \hat{\Phi}_m(X_{s-}^{\varepsilon},\alpha^{\varepsilon}_{s-}+g^{\varepsilon}(X_{s-}^{\varepsilon},\alpha_{s-}^{\varepsilon},z)) - \hat{\Phi}_m(X_{s-}^{\varepsilon},\alpha_{s-}^{\varepsilon})\right] \tilde{N}(ds,dz).
\end{align*}
Using estimate \eref{PhiApp2} and similar argument in the proof of \eref{F3.1.1}, we can deduce that
\begin{align*}
\mathbb{E}\left|\int_{t_0}^t Q(X_{s}^{\varepsilon})\hat{\Phi}_m(X_{s}^{\varepsilon},\cdot)(\alpha^{\varepsilon}_{s})ds\right|^2 \leq C_{T}(1+|x|^{2k+2})\left(m^2\varepsilon^2 + \varepsilon\right).
\end{align*}
This and \eref{F4.19} yield
\begin{align*}
\mathbb{E}\left|\int_{t_0}^t H(X_{s}^{\varepsilon_k},\alpha^{\varepsilon_k}_{s})ds\right|^2 \leq C_{T}(1+|x|^{2k+2})\left(m^{-2} + m^2\varepsilon^2 + \varepsilon\right).
\end{align*}
Subsequently, taking $m = [1/\varepsilon]$, here $[1/\varepsilon]$ is the integer part of $1/\varepsilon$, then it is easy to verify that \eref{ConAV} holds.
\end{proof}

\vspace{0.1cm}
Now,  we  give the proof of our Theorem 2.3.

\textbf{Proof of Theorem \ref{main result 1}}: The proof will be divided  two steps:

\textbf{Step 1:} According to Proposition \ref{pro4.6}, it suffices to prove that for any sequence $\varepsilon_k\to 0$, there exists a subsequence (still denoted by $\varepsilon_k$) such that $X^{\varepsilon_k}$ converges weakly to $X$ in $C([0,T];\mathbb{R}^{n})$ (short by $X^{\varepsilon_k}{\overset{w}\longrightarrow} X$). Without loss of generality, we may assume that $X^{\varepsilon_k}$ converges almost surely to $X$ in $C([0,T];\mathbb{R}^{n})$ by using the Skorohod representation theorem. Next, we will use the martingale problem method to characterize the limit process $X$ as the unique solution of Eq.\eref{bar}.

For given $0 \leq t_0 \leq T$, let $\Psi_{t_0}(\cdot)$ be a bounded continuous function on $C([0,T],\mathbb{R}^n)$ that is measurable with respect to $\sigma\{\varphi_t, \varphi \in C([0,T],\mathbb{R}^n), t \leq t_0\}$. We first prove that for any $U \in C^4_b(\mathbb{R}^n)$, the following assertion holds:
\begin{eqnarray}
\mathbb{E}\left[\left(U(X_{t})-U(X_{t_{0}})-\int^{t}_{t_{0}}\bar{\mathscr{L}}U( X_{s})ds\right)\Psi_{t_{0}}({X})\right]=0,\quad t_0 \leq t \leq T,\label{Mar11}
\end{eqnarray}
where $\bar{\mathscr{L}}$ is the infinitesimal generator of Eq.\eref{bar}, i.e.,
\begin{eqnarray}
\bar{\mathscr{L}}U(x):=\langle\nabla U(x), \bar B(x)\rangle+\frac{1}{2}\text{Tr}\left[\nabla^2U(x)\bar\Sigma(x)\right],\quad x\in\mathbb{R}^n.\label{Generator}
\end{eqnarray}

Using \eref{F3.15}, we have
\begin{align*}
d\Phi(X_{t}^{\varepsilon_k},\alpha_{t}^{\varepsilon_k})=&
\frac{1}{\sqrt{\varepsilon_k}}\partial_x\Phi_K(X_{t}^{\varepsilon_k},\alpha_{t}^{\varepsilon_k})dt+\partial_x\Phi(X_{t}^{\varepsilon_k},\alpha_{t}^{\varepsilon_k})\cdot b(X_{t}^{\varepsilon_k},\alpha_{t}^{\varepsilon_k})dt\\
&+\partial_x\Phi(X_{t}^{\varepsilon_k},\alpha_{t}^{\varepsilon_k})\cdot \sigma(X_{t}^{\varepsilon_k},\alpha_{t}^{\varepsilon_k})dW_t\\
	&+\frac{1}{\varepsilon_k}Q(X_{t}^{\varepsilon_k})\Phi(X_{t}^{\varepsilon_k},\cdot)( \alpha_t^{\varepsilon_k})dt+\frac{1}{\sqrt{\varepsilon_k}}F(X_{t}^{\varepsilon_k},\alpha_t^{\varepsilon_k})dt\\
&+\frac{1}{2}\text{Tr}[\partial_x^{2}\Phi(X_{t}^{\varepsilon_k},\alpha_{t}^{\varepsilon_k})\cdot \left(\sigma\sigma^{*}\right)(X_{t}^{\varepsilon_k},\alpha_{t}^{\varepsilon_k})]dt\\
	&+\int_{[0,\infty)}\left[ \Phi(X_{t-}^{\varepsilon_k},\alpha_{t-}^{\varepsilon_k}+\!g^{\varepsilon}(X_{t-}^{\varepsilon_k},\alpha_{t-}^{\varepsilon_k},z))-\Phi(X_{t-}^{\varepsilon_k},\alpha_{t-}^{\varepsilon_k})\right]\tilde{N}(dt,dz)
\end{align*}
and Ito's formula gives
\begin{align*}
	d\nabla U(X_{t}^{\varepsilon_k})=&\frac{1}{\sqrt{\varepsilon_k}}\nabla^2 U(X_{t}^{\varepsilon_k})\cdot K(X_{t}^{\varepsilon_k},\alpha_{t}^{\varepsilon_k})dt+\nabla^2 U(X_{t}^{\varepsilon_k})\cdot b(X_{t}^{\varepsilon_k},\alpha_{t}^{\varepsilon_k}) dt\\
	&+ \nabla^2 U(X_{t}^{\varepsilon_k})\cdot \sigma(X_{t}^{\varepsilon_k},\alpha_{t}^{\varepsilon_k})d W_t+\frac{1}{2}\text{Tr}[\nabla^3U(X_{t}^{\varepsilon_k})\cdot \left(\sigma\sigma^{*}\right)(X_{t}^{\varepsilon_k}, \alpha_t^{\varepsilon_k})]dt.
\end{align*}
Then, using the integration by parts formula, for any $t \geq t_0$, we obtain
\begin{align*}
	&\big\langle\Phi(X_{t}^{\varepsilon_k},\alpha_{t}^{\varepsilon_k}), \nabla U(X_{t}^{\varepsilon_k})\big\rangle\\
	=&\big\langle\Phi(X_{t_0}^{\varepsilon_k},\alpha^{\varepsilon_k}_{t_0}), \nabla U(X_{t_0}^{\varepsilon_k})\big\rangle+\int^t_{t_0}\big\langle\Phi(X_{s}^{\varepsilon_k},\alpha_{s}^{\varepsilon_k}), d\nabla U(X_{s}^{\varepsilon_k})\big\rangle\\
	&+\int^t_{t_0}\big\langle\nabla U(X_{s}^{\varepsilon_k}), d\Phi(X_{s}^{\varepsilon_k},\alpha_{s}^{\varepsilon_k})\big\rangle+\int^t_{t_0} d[\Phi(X^{\varepsilon_k},\alpha^{\varepsilon_k}), \nabla U(X^{\varepsilon_k})]_s\\
	=&\big\langle \Phi(X_{t_0}^{\varepsilon_k},\alpha^{\varepsilon_k}_{t_0}), \nabla U(X_{t_0}^{\varepsilon_k}) \big\rangle
+\frac{1}{\sqrt{\varepsilon_k}}\int^t_{t_0}\left\langle\Phi(X_{s}^{\varepsilon_k},\alpha_{s}^{\varepsilon_k}),\nabla^2 U(X_{s}^{\varepsilon_k})\cdot K(X_{s}^{\varepsilon_k}, \alpha^{\varepsilon_k}_s) \right\rangle ds\\
	&+\frac{1}{\sqrt{\varepsilon_k}}\int^t_{t_0}\left\langle\nabla U(X_{s}^{\varepsilon_k}),\partial_x \Phi_K(X_{s}^{\varepsilon_k}, \alpha^{\varepsilon_k}_s) +F(X_{s}^{\varepsilon_k},\alpha^{\varepsilon_k}_{s}) \right\rangle ds\\
	&+\frac{1}{\varepsilon_k}\int^{t}_{t_0}\left\langle \nabla U(X_{s}^{\varepsilon_k}),Q(X_{s}^{\varepsilon_k})\Phi(X_{s}^{\varepsilon_k},\cdot)(\alpha^{\varepsilon_k}_{s})\right\rangle ds\\
&+\int^{t}_{t_0}\int_{[0,\infty)}\left\langle \nabla U(X_{s}^{\varepsilon_k}),\left[ \Phi(X_{s-}^{\varepsilon_k},\alpha_{s-}^{\varepsilon_k}+\!g^{\varepsilon}(X_{s-}^{\varepsilon_k},\alpha_{s-}^{\varepsilon_k},z))-\Phi(X_{s-}^{\varepsilon_k},\alpha_{s-}^{\varepsilon_k})\right]\right\rangle\tilde{N}(ds,dz)\\
&+\mathbb{R}^{\varepsilon_k}_{\nabla U,\Phi}(t_0,t),
\end{align*}
where
\begin{align}
&\mathbb{R}^{\varepsilon_k}_{\nabla U,\Phi}(t_0,t)   \nonumber\\
:=&\int^t_{t_0}\big\langle\Phi(X_{s}^{\varepsilon_k},\alpha_{s}^{\varepsilon_k}),\big[\nabla^2 U(X_{s}^{\varepsilon_k})\cdot b(X_{s}^{\varepsilon_k},\alpha_{s}^{\varepsilon_k})+\frac12\text{Tr}[\nabla^3 U(X_{s}^{\varepsilon_k})\cdot (\sigma\sigma^{*})(X_{s}^{\varepsilon_k},\alpha_{s}^{\varepsilon_k})]\big]\big\rangle ds \nonumber\\
&+\int^t_{t_0}\big\langle\nabla U(X_{s}^{\varepsilon_k}),\big[\partial_x\Phi(X_{s}^{\varepsilon_k},\alpha_{s}^{\varepsilon_k})\cdot b(X_{s}^{\varepsilon_k},\alpha_{s}^{\varepsilon_k})+\frac12
\text{Tr}[\partial_x^{2}\Phi(X_{s}^{\varepsilon_k},\alpha_{s}^{\varepsilon_k})\cdot \left(\sigma\sigma^{*}\right)(X_{s}^{\varepsilon_k},\alpha_{s}^{\varepsilon_k})]\big]\big\rangle ds\nonumber\\
&+ \int^t_{t_0}\big\langle\Phi(X_{s}^{\varepsilon_k},\alpha_{s}^{\varepsilon_k}),\nabla^2 U(X_{s}^{\varepsilon_k})\cdot \sigma(X_{s}^{\varepsilon_k},\alpha_{s}^{\varepsilon_k})dW_s\big\rangle \nonumber\\
&+\int^t_{t_0}\big\langle\Phi(X_{s}^{\varepsilon_k},\alpha_{s}^{\varepsilon_k}),\partial_x\Phi(X_{s}^{\varepsilon_k},\alpha_{s}^{\varepsilon_k})\cdot \sigma(X_{s}^{\varepsilon_k},\alpha_{s}^{\varepsilon_k})dW_s\big\rangle\nonumber\\
&+ \int^t_{t_0}\!\text{Tr}\! \left[\left(\partial_x\Phi(X_{s}^{\varepsilon_k},\alpha_{s}^{\varepsilon_k})
\sigma(X_{s}^{\varepsilon_k},\alpha_{s}^{\varepsilon_k})\right)^{\ast}
\left(\nabla^2 U(X_{s}^{\varepsilon_k})
\sigma(X_{s}^{\varepsilon_k},\alpha_{s}^{\varepsilon_k}) \right)\right]ds.\label{3.32}
\end{align}

According to Poisson equation \eref{PE1}, we have
\begin{align}
&\frac{1}{\sqrt{\varepsilon_k}}\int_{t_0}^{t}\left \langle\nabla U(X_{s}^{\varepsilon_k}),K(X^{\varepsilon_k}_s, \alpha^{\varepsilon_k}_s)\right\rangle ds\nonumber\\
=&-\frac{1}{\sqrt{\varepsilon_k}}\int_{t_0}^{t} \left\langle \nabla U(X_{s}^{\varepsilon_k}),Q(X_{s}^{\varepsilon_k})\Phi(X_{s}^{\varepsilon_k},\cdot)(\alpha^{\varepsilon_k}_{s})\right\rangle ds\nonumber\\
=&\sqrt{\varepsilon_k}\left[\mathbb{R}^{\varepsilon_k}_{\nabla U,\Phi}(t_0,t)+\langle\Phi(X_{t_0}^{\varepsilon_k},\alpha^{\varepsilon_k}_{t_0}), \nabla U(X_{t_0}^{\varepsilon_k})\rangle-\langle\Phi(X_{t}^{\varepsilon_k},\alpha_{t}^{\varepsilon_k}), \nabla U(X_{t}^{\varepsilon_k})\rangle\right]\nonumber\\
&+\int_{t_0}^{t}\left\langle\Phi(X_{s}^{\varepsilon_k},\alpha_{s}^{\varepsilon_k}),\nabla^2 U(X_{t}^{\varepsilon_k})\cdot K(X^{\varepsilon_k}_s,\alpha^{\varepsilon_k}_s)\right\rangle ds\nonumber\\
&+\int_{t_0}^{t}\left\langle\nabla U(X_{s}^{\varepsilon_k}),\partial_x\Phi_K(X_{s}^{\varepsilon_k},\alpha_{s}^{\varepsilon_k})+F(X_{s}^{\varepsilon_k},\alpha^{\varepsilon_k}_{s}) \right\rangle ds\nonumber\\
&+\sqrt{\varepsilon_k}\int_{t_0}^{t}\int_{[0,\infty)}\left\langle\nabla U(X_{s}^{\varepsilon_k}),\left[ \Phi(X_{s}^{\varepsilon_k},\alpha_{s-}^{\varepsilon_k}
+\!g^{\varepsilon}(X_{s}^{\varepsilon_k},\alpha_{s-}^{\varepsilon_k},z))-\Phi(X_{s}^{\varepsilon_k},\alpha_{s-}^{\varepsilon_k})\right]\right\rangle\tilde{N}(ds,dz).
\label{F4.11}
\end{align}

Notice that
\begin{align*}
	&\int_{t_0}^{t}\left\langle\Phi(X_{s}^{\varepsilon_k},\alpha_{s}^{\varepsilon_k}),\nabla^2 U(X_{s}^{\varepsilon_k})\cdot K(X^{\varepsilon_k}_s,\alpha^{\varepsilon_k}_s)\right\rangle ds\\
	=&\sum_{i=1}^{n}\sum_{j=1}^{n}\int_{t_0}^{t} \partial _{x_i}\partial_{x_j} U(X_{s}^{\varepsilon_k}) (K\otimes\Phi)_{ij}(X_{s}^{\varepsilon_k},\alpha^{\varepsilon_k}_{s})ds\\
	=&\frac{1}{2}\sum_{i=1}^{n}\sum_{j=1}^{n}\int_{t_0}^{t} \partial _{x_i}\partial_{x_j} U(X_{s}^{\varepsilon_k}) \left[(K\otimes\Phi)+(K\otimes\Phi)^{\ast}\right]_{ij}(X_{s}^{\varepsilon_k},\alpha^{\varepsilon_k}_{s})ds\\
=&\frac{1}{2}\int_{t_0}^{t} \text{Tr}\left[\nabla^2 U(X_{s}^{\varepsilon_k}) \left[(K\otimes\Phi)+(K\otimes\Phi)^{\ast}\right](X_{s}^{\varepsilon_k},\alpha^{\varepsilon_k}_{s})\right]ds.
\end{align*}
Using It\^{o}'s formula, we can obtain
\begin{align*}
U(X_{t}^{\varepsilon_k})
=& U(X_{t_0}^{\varepsilon_k})+\int_{t_0}^{t}\left\langle\nabla U(X_{s}^{\varepsilon_k}),b(X_{s}^{\varepsilon_k},\alpha^{\varepsilon_k}_{s})\right\rangle ds+\int_{t_0}^{t}\left\langle\nabla U(X_{s}^{\varepsilon_k}),\sigma(X_{s}^{\varepsilon_k},\alpha^{\varepsilon_k}_{s})dW_s\right\rangle\nonumber\\
&+\frac{1}{2}\int_{t_0}^{t}
\text{Tr}[\nabla^2U(X_{s}^{\varepsilon_k})\cdot \left(\sigma\sigma^{*}\right)(X_{s}^{\varepsilon_k},\alpha^{\varepsilon_k}_{s})]ds\nonumber\\
&+\frac{1}{\sqrt{\varepsilon_k}}\int_{t_0}^{t}\left\langle\nabla U(X_{s}^{\varepsilon_k}),K(X_{s}^{\varepsilon_k},\alpha^{\varepsilon_k}_{s})\right\rangle ds\\
=& U(X_{t_0}^{\varepsilon_k})+\int_{t_0}^{t}\left\langle\nabla U(X_{s}^{\varepsilon_k}),b(X_{s}^{\varepsilon_k},\alpha^{\varepsilon_k}_{s})\right\rangle ds+\int_{t_0}^{t}\left\langle\nabla U(X_{s}^{\varepsilon_k}),\sigma(X_{s}^{\varepsilon_k},\alpha^{\varepsilon_k}_{s})dW_s\right\rangle\nonumber\\
&+\frac{1}{2}\int_{t_0}^{t}
\text{Tr}[\nabla^2 U(X_{s}^{\varepsilon_k})\cdot \left(\sigma\sigma^{*}\right)(X_{s}^{\varepsilon_k},\alpha^{\varepsilon_k}_{s})]ds\nonumber\\
&+\frac{1}{2}\int_{t_0}^{t} \text{Tr}\left[\nabla^2 U(X_{s}^{\varepsilon_k}) \left[(K\otimes\Phi)+(K\otimes\Phi)^{\ast}\right](X_{s}^{\varepsilon_k},\alpha^{\varepsilon_k}_{s})\right]ds\nonumber\\
&+\int_{t_0}^{t}\left\langle\nabla U(X_{s}^{\varepsilon_k}),\partial_x\Phi_K(X_{s}^{\varepsilon_k},\alpha_{s}^{\varepsilon_k})+F(X_{s}^{\varepsilon_k},\alpha^{\varepsilon_k}_{s}) \right\rangle ds\nonumber\\
&+\sqrt{\varepsilon_k}\int_{t_0}^{t}\int_{[0,\infty)}\left\langle\nabla U(X_{s}^{\varepsilon_k}),\left[ \Phi(X_{s-}^{\varepsilon_k},\alpha_{s-}^{\varepsilon_k}+\!g^{\varepsilon}(X_{s-}^{\varepsilon_k},\alpha_{s-}^{\varepsilon_k},z))
-\Phi(X_{s-}^{\varepsilon_k},\alpha_{s-}^{\varepsilon_k})\right]\right\rangle\tilde{N}(ds,dz)\nonumber\\
&+\sqrt{\varepsilon_k}\left[\mathbb{R}^{\varepsilon_k}_{\nabla U,\Phi}(t_0,t)\!+\!\left\langle\Phi(X_{t_0}^{\varepsilon_k},\alpha^{\varepsilon_k}_{t_0}), \nabla U(X_{t_0}^{\varepsilon_k})\right\rangle\!-\!\left\langle\Phi(X_{t}^{\varepsilon_k},\alpha_{t}^{\varepsilon_k}), \nabla U(X_{t}^{\varepsilon_k})\right\rangle\right].\label{3.31}
\end{align*}

According to \eref{Phi} and $U\in C^4_b(\mathbb{R}^{n})$, it is easy to see that
\begin{eqnarray}
&&\lim_{k\to\infty}\sqrt{\varepsilon_k}\mathbb{E}\big\{\big[\mathbb{R}^{\varepsilon_k}_{\nabla U,\Phi}(t,t_0)+\left\langle\Phi(X_{t_0}^{\varepsilon_k},\alpha^{\varepsilon_k}_{t_0}), \nabla U(X_{t_0}^{\varepsilon_k})\right\rangle\nonumber\\
&&\quad\quad\quad\quad\quad-\left\langle\Phi(X_{t}^{\varepsilon_k},\alpha_{t}^{\varepsilon_k}), \nabla U(X_{t}^{\varepsilon_k})\right\rangle\big]\Psi_{t_0}(X^{\varepsilon_k})\big\}=0\label{3.35}
\end{eqnarray}
and
\begin{align*}
&\mathbb{E}\left\{\left[\int^{t}_{t_0}\int_{[0,\infty)}\left\langle\nabla U(X_{s}^{\varepsilon_k}),\left[ \Phi(X_{s-}^{\varepsilon_k},\alpha_{s-}^{\varepsilon_k}
+\!g^{\varepsilon}(X_{s-}^{\varepsilon_k},\alpha_{s-}^{\varepsilon_k},z))-\Phi(X_{s-}^{\varepsilon_k},\alpha_{s-}^{\varepsilon_k})\right]\right\rangle\tilde{N}(ds,dz)\right.\right.\\
&\quad\quad+\left.\left.\int^t_{t_0}\left\langle\nabla U(X_{s}^{\varepsilon_k}),\sigma(X_{s}^{\varepsilon_k},\alpha_{s}^{\varepsilon_k})dW_s\right\rangle\right]\Psi_{t_0}(X^{\varepsilon_k})\right\}=0.
\end{align*}

Therefore, \eref{Mar11} holds provided that
\begin{eqnarray}
\lim_{k\rightarrow \infty}\mathbb{E}\left[\Gamma^{\varepsilon_k}\Psi_{t_0}(X^{\varepsilon_k})\right]=\mathbb{E} \left[\Gamma\Psi_{t_0}( X)\right],\label{Step2}
\end{eqnarray}
where
\begin{align*}
    \Gamma^{\varepsilon_k}:=&U(X_{t}^{\varepsilon_k})-U(X_{t_0} ^{\varepsilon_k})-\int_{t_0}^t \langle \nabla U(X_{s}^{\varepsilon_k}), \left(b+\partial_x\Phi_{K}+F\right)(X_{s}^{\varepsilon_k},\alpha^{\varepsilon_k}_{s}) \rangle ds\nonumber\\
    &-\frac{1}{2}\int_{t_0}^t \text{Tr}\left[\nabla^2 U(X_{s}^{\varepsilon_k}) \left[(\sigma\sigma^{*})+(K\otimes\Phi)+(K\otimes\Phi)^{\ast}\right](X_{s}^{\varepsilon_k},\alpha^{\varepsilon_k}_{s})\right]ds
\end{align*}
and
\begin{eqnarray*}
    \Gamma:=U(X_{t})-U(X_{t_0})-\int_{t_0}^t\mathscr{\bar L}U(X_{s})ds.
\end{eqnarray*}

\textbf{Step 2}: We will prove \eref{Step2} in this step. Since $X^{\varepsilon_k}$ converges almost surely to $X$ in $C([0,T];\mathbb{R}^{n})$, it is easy to see that
\begin{align*}
\lim_{k\rightarrow \infty}\mathbb{E}\left[\left(U(X_{t}^{\varepsilon_k})-U(X_{t_0} ^{\varepsilon_k})\right)\Psi_{t_0}(X^{\varepsilon_k})\right]=\mathbb{E} \left[\left(U(X_{t})-U(X_{t_0})\right)\Psi_{t_0}(X)\right],
\end{align*}
\begin{align*}
    \lim_{k\to \infty} \mathbb{E}\left\{\left[\int_{t_0}^t\langle\nabla U(X^{\varepsilon_k}_s),\bar{B}(X^{\varepsilon_k}_s)\rangle ds\right]\Psi_{t_0}(X^{\varepsilon_k})\right\}
    =\mathbb{E}\left\{\left[\int_{t_0}^t\langle\nabla U(X_s),\bar{B}(X_s)\rangle ds\right]\Psi_{t_0}(X)\right\}
\end{align*}
and
\begin{align*}
    &\lim_{k\to \infty} \mathbb{E}\left\{\left[\int_{t_0}^t\text{Tr}\left[\nabla^2 U(X_{s}^{\varepsilon_k}) \bar\Sigma(X_{s}^{\varepsilon_k})\right]ds\right]\Psi_{t_0}(X^{\varepsilon_k})\right\}\\
    =&\mathbb{E}\left\{\left[\int_{t_0}^t\text{Tr}\left[\nabla^2 U(X_{s}) \bar\Sigma(X_{s})\right]ds\right]\Psi_{t_0}(X)\right\}.
\end{align*}
Therefore,  in order to prove \eref{Step2}, it suffices to show that the followings hold:
\begin{eqnarray}
\lim_{k\to \infty} \mathbb{E}\left|\int^t_{t_0}\langle\nabla U(X^{\varepsilon_k}_s),B(X^{\varepsilon_k}_s, \alpha^{\varepsilon_k}_s)-\bar B(X^{\varepsilon_k}_s)\rangle ds\right|=0.\label{3.39}
\end{eqnarray}
and
\begin{eqnarray}
\lim_{k\rightarrow \infty} \mathbb{E}\left|\int^t_{t_0}\text{Tr}\left[\nabla^2 U(X_{s}^{\varepsilon_k}) [\Sigma(X_{s}^{\varepsilon_k},\alpha^{\varepsilon_k}_{s})-\bar\Sigma(X_{s}^{\varepsilon_k})]\right]ds\right|=0.\label{3.37}
\end{eqnarray}

Denote
\begin{align*}
&\Psi_1(x,i)=\text{Tr}\left[\nabla^2 U(x) \left[\Sigma(x,i)-\bar\Sigma(x)\right]\right],\\
&\Psi_2(x,i)=\langle\nabla U(x),B(x, i)-B(x)\rangle.
\end{align*}
It is easy to see that $\Psi_j$ ($j=1,2$) satisfies the "centering condition" \eref{CenCon}, and has the followings
\begin{align*}
\|\Psi_j(x,\cdot)-\Psi_j(y,\cdot)\|_{\infty}\leq C(1+|x|+|y|)|x-y|,\quad \|\Psi_j(x,\cdot)\|_{\infty}\leq C(1+|x|).
\end{align*}
Proposition \ref{Pro4.2} implies that \eref{3.39} and \eref{3.37} hold.

Finally,  since the uniqueness of the martingale problem solution for the operator $\bar{\mathscr{L}}$ is equivalent to the uniqueness of the weak solution of Eq.\eref{bar},  uniqueness of strong solution of Eq.\eref{bar} implies that the accumulation point $X$ is the unique solution of Eq.\eref{bar} in the weak sense.   \hfill\fbox

\section{Weak Convergence Order of $X^{\varepsilon}_t$ in $\mathbb{R}^n$}
\vspace{0.1cm}
In this section,  we will study the convergence order of $|\mathbb{E}\phi(X^{\varepsilon}_t) - \mathbb{E}\phi(\bar{X}_t)|$ ($t\le T$) for suitable test functions $\phi$.

Considering the Kolmogorov equation
\begin{equation}\label{KE1}
\begin{cases}
\displaystyle
\partial_t u(t,x) = \bar{\mathscr{L}} u(t,x), & t \in [0, T], \\
u(0, x) = \phi(x), &
\end{cases}
\end{equation}
where $\phi \in C^{4}_p(\mathbb{R}^n)$ and $\bar{\mathscr{L}}$ is defined in \eref{Generator} ,which is the generator of Eq.\eref{bar}. Clearly, Eq.\eref{KE1} has a unique solution $u$ given by
$$
u(t,x) = \mathbb{E}\phi(\bar{X}^x_t), \quad t \geq 0.
$$

Using the estimate \eref{RE2.4},  we can obtain the regularity estimates for the solution $u$ of Eq.\eref{KE1}, since the proof is almost the same as that of \cite[Lemma 5.1]{SSWX2024}, we omit it.

\begin{lemma} \label{Lemma_3.3.2}
For  $T > 0$ and $\phi \in C^{4}_p(\mathbb{R}^n)$, there exist $C_{T,\phi} > 0$ and $k > 0$ such that for  $x \in \mathbb{R}^n$,
$$
\sup_{0 \leq t \leq T} \sum_{j=0}^{4} \|\partial^j_{x} u(t,x)\| \leq C_{T,\phi}(1 + |x|^{k}),
$$
$$
\sup_{0 \leq t \leq T} \sum_{j=1}^{2} \|\partial_t(\partial^j_x u(t,x))\| \leq C_{T,\phi} (1 + |x|^{k}).
$$
\end{lemma}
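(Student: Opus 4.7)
The plan is to exploit the Feynman--Kac representation $u(t,x)=\mathbb{E}\phi(\bar{X}^{x}_{t})$, where $\bar{X}^{x}$ denotes the solution of Eq.\eref{bar} started at $x$. Under Assumption \ref{A4} the remark preceding the lemma provides \eref{RE2.4}, which says that $\bar B$ and $\bar\Sigma^{1/2}$ lie in $C^{4}$ with uniformly bounded partial derivatives of orders $1$ through $4$. Combined with the moment estimate \eref{barX2} for $\bar X^{x}$, this sets up the classical route: bound moments of the derivative flows $\partial^{j}_{x}\bar X^{x}_{t}$ and propagate them through the chain rule applied to $u(t,x)=\mathbb{E}\phi(\bar{X}^{x}_{t})$.

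First, I would differentiate Eq.\eref{bar} with respect to the initial datum $x$ to obtain the first variation equation for $\eta^{1,x}_{t}:=\partial_{x}\bar X^{x}_{t}$,
\begin{equation*}
d\eta^{1,x}_{t}=\nabla\bar B(\bar X^{x}_{t})\eta^{1,x}_{t}\,dt+\nabla\bar\Sigma^{1/2}(\bar X^{x}_{t})\eta^{1,x}_{t}\,d\bar W_{t},\qquad \eta^{1,x}_{0}=I_{n}.
\end{equation*}
Since $\|\nabla\bar B\|$ and $\|\nabla\bar\Sigma^{1/2}\|$ are uniformly bounded by \eref{RE2.4}, a Gronwall plus BDG argument immediately yields $\mathbb{E}\sup_{0\leq t\leq T}\|\eta^{1,x}_{t}\|^{p}\leq C_{p,T}$ independently of $x$. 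Iterating for $j=2,3,4$, the processes $\eta^{j,x}_{t}:=\partial^{j}_{x}\bar X^{x}_{t}$ satisfy SDEs whose coefficients are polynomials in the lower-order variations $\eta^{1,x},\dots,\eta^{j-1,x}$ with multiplicative weights controlled by $\nabla^{l}\bar B$ and $\nabla^{l}\bar\Sigma^{1/2}$ for $l\leq j$; using \eref{RE2.4} once more, the same inductive argument yields $\sup_{0\leq t\leq T}\mathbb{E}\|\eta^{j,x}_{t}\|^{p}\leq C_{p,T}$ for $j=1,\dots,4$.

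Next, the Fa\`a di Bruno formula expresses $\partial^{j}_{x}u(t,x)$ as the expectation of a finite sum of products of the form $\partial^{|\alpha|}\phi(\bar X^{x}_{t})\cdot\prod_{l}\eta^{\alpha_{l},x}_{t}$ with $|\alpha|\leq j\leq 4$. The polynomial growth of $\partial^{|\alpha|}\phi$ coming from $\phi\in C^{4}_{p}(\mathbb{R}^{n})$, H\"older's inequality, the moment bound \eref{barX2} on $\bar X^{x}_{t}$, and the $x$-uniform moment bounds on the variations $\eta^{j,x}_{t}$ combine to yield
\begin{equation*}
\sup_{0\leq t\leq T}\sum_{j=0}^{4}\|\partial^{j}_{x}u(t,x)\|\leq C_{T,\phi}(1+|x|^{k}).
\end{equation*}

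Finally, the time derivative estimates follow from the Kolmogorov equation \eref{KE1} itself. Writing
\begin{equation*}
\partial_{t}u(t,x)=\langle\nabla u(t,x),\bar B(x)\rangle+\tfrac{1}{2}\text{Tr}\bigl[\nabla^{2}u(t,x)\bar\Sigma(x)\bigr],
\end{equation*}
and noting $|\bar B(x)|\leq C(1+|x|)$ from \eref{Lipbarb} while $\bar\Sigma$ is bounded (each summand in \eref{DN} is a product of bounded quantities via \ref{A1}, \ref{A3} and \eref{Phi}), the spatial bound from the previous paragraph immediately gives the claimed polynomial growth of $\partial_{t}u$. Differentiating this identity once or twice more in $x$ expresses $\partial_{t}\partial^{j}_{x}u$ for $j=1,2$ as a finite sum of products of spatial derivatives of $u$ up to order $j+2$ against derivatives of $\bar B,\bar\Sigma$ up to order $j$, all already controlled by the estimate of the previous step. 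The main technical point in this programme is Step~2: verifying that differentiating the square root $\bar\Sigma^{1/2}$ up to four times does not produce blow-up, for which the uniform non-degeneracy \eref{NonD} funnelled into \eref{RE2.4} is precisely the input needed.
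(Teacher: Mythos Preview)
Your proposal is correct and follows exactly the standard route the paper has in mind: the paper omits the proof and refers to \cite[Lemma~5.1]{SSWX2024}, whose argument is precisely the Feynman--Kac representation combined with moment bounds on the derivative flows $\partial^{j}_{x}\bar X^{x}_{t}$ obtained from the variation equations, followed by Fa\`a di Bruno and the Kolmogorov equation for the time-derivative bounds. Your identification of \eref{RE2.4} (boundedness of $\nabla^{j}\bar B$, $\nabla^{j}\bar\Sigma^{1/2}$ for $1\leq j\leq 4$, relying on the uniform non-degeneracy) as the key input matches the paper's own remark preceding the lemma.
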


\vspace{0.1cm}
Now, we can prove our second main result.

\textbf{Proof of Theorem \ref{main result 2}}:
Fix $t \in (0,T]$, and let $\tilde{u}^t(s,x) := u(t-s,x)$ for $0 \leq s \leq t$.  It\^o's formula implies that
\begin{align*}
\tilde{u}^t(t, X^{\varepsilon}_t) &= \tilde{u}^t(0,x) + \int^t_0 \partial_s \tilde{u}^t(s, X^{\varepsilon}_s) \, ds + \frac{1}{\sqrt{\varepsilon}} \int^t_0 \langle K(X^{\varepsilon}_s,\alpha^{\varepsilon}_s),\partial_x\tilde{u}^t(s,X^{\varepsilon}_s)\rangle \, ds \\
&\quad + \int^t_0 \mathscr{L}_{1}(\alpha^{\varepsilon}_s)\tilde{u}^t(s, X^{\varepsilon}_s) \, ds + \int^t_0 \langle\partial_x\tilde{u}^t(s, X^{\varepsilon}_s), \sigma(X^{\varepsilon}_s,\alpha^{\varepsilon}_s) \, dW_s\rangle,
\end{align*}
where
\begin{align*}
\mathscr{L}_{1}(\alpha)\phi(x) &:= \langle b(x,\alpha), \nabla \phi(x)\rangle + \frac{1}{2}\text{Tr}\left[\nabla^2\phi(x)\left(\sigma\sigma^{*}\right)(x,\alpha)\right], \quad \phi \in C^2(\mathbb{R}^n).
\end{align*}

Since $\tilde{u}^t(t,X^{\varepsilon}_t)=\phi(X^{\varepsilon}_t)$,  $\tilde{u}^t(0, x)=\mathbb{E}\phi(\bar{X}^{x}_t)$ and $\partial_s \tilde{u}^t(s, X^{\varepsilon}_s)=-\bar{\mathscr{L}} \tilde{u}^t(s,X^{\varepsilon}_s)$,  we have
\begin{align}
\left|\mathbb{E}\phi(X^{\varepsilon}_{t}) - \mathbb{E}\phi(\bar{X}_{t})\right|=& \Big|\mathbb{E}\int^t_0 -\bar{\mathscr{L}} \tilde{u}^t(s, X^{\varepsilon}_s) \, ds + \mathbb{E}\int^t_0 \mathscr{L}_{1}(\alpha^{\varepsilon}_s)\tilde{u}^t(s,X^{\varepsilon}_s) \, ds\nonumber \\
&\quad\quad +\frac{1}{\sqrt{\varepsilon}} \mathbb{E}\int^t_0 \langle K(X^{\varepsilon}_s,\alpha^{\varepsilon}_s), \partial_x \tilde{u}^t(s,X_{s}^{\varepsilon})\rangle \, ds\Big|. \label{3.41}
\end{align}

By a similar argument in \eref{F4.11},  one has
\begin{align*}
&\frac{1}{\sqrt{\varepsilon}} \int^t_{0} \langle K(X^{\varepsilon}_s,\alpha^{\varepsilon}_s), \partial_x \tilde{u}^t(s,X_{s}^{\varepsilon})\rangle \, ds \nonumber \\
&= -\frac{1}{\sqrt{\varepsilon}} \int^t_{0} \left\langle Q(X_{s}^{\varepsilon})\Phi(X_{s}^{\varepsilon},\cdot)(\alpha^{\varepsilon}_{s}), \partial_x \tilde{u}^t(s,X_{s}^{\varepsilon})\right\rangle \, ds \nonumber \\
&= \sqrt{\varepsilon} \left[ \mathbb{R}^{\varepsilon}_{\partial_x\tilde{u},\Phi}(0,t) + \langle\Phi(x,\alpha), \partial_x \tilde{u}^t(0,x)\rangle - \langle\Phi(X_{t}^{\varepsilon},\alpha_{t}^{\varepsilon}), \partial_x \tilde{u}^t(t,X_{t}^{\varepsilon})\rangle \right] \nonumber \\
&\quad + \int^t_{0} \left\langle \Phi(X_{s}^{\varepsilon},\alpha_{s}^{\varepsilon}), \partial^2_x \tilde u^t(s,X_{s}^{\varepsilon}) \cdot K(X^{\varepsilon}_s,\alpha^{\varepsilon}_s) \right\rangle \, ds \nonumber \\
&\quad + \int^t_{0} \left\langle \partial_x \tilde{u}^t(s,X_{s}^{\varepsilon}), \partial_x\Phi_K(X_{s}^{\varepsilon},\alpha_{s}^{\varepsilon}) + F(X_{s}^{\varepsilon},\alpha^{\varepsilon}_{s}) \right\rangle \, ds \nonumber \\
&\quad + \sqrt{\varepsilon} \int^{t}_{0} \int_{[0,\infty)} \left\langle \partial_x \tilde{u}^t(s,X_{s}^{\varepsilon}), \left[ \Phi(X_{s-}^{\varepsilon},\alpha_{s-}^{\varepsilon} + g^{\varepsilon}(X_{s-}^{\varepsilon},\alpha_{s-}^{\varepsilon},z)) - \Phi(X_{s-}^{\varepsilon},\alpha_{s-}^{\varepsilon}) \right] \right\rangle\tilde{N}(ds,dz),
\end{align*}
where $\mathbb{R}^{\varepsilon}_{\partial_x \tilde{u}^t,\Phi}(0,t)$ is defined in \eref{3.32} with $\varepsilon_k$ and $\nabla U$ are replaced by $\varepsilon$ and $\partial_x \tilde{u}^t$ respectively.	

Therefore, we have
\begin{align}
&\left|\mathbb{E}\phi(X^{\varepsilon}_{t})-\mathbb{E}\phi(\bar{X}_{t})\right|\nonumber\\
\leq&\Big|\mathbb{E}\!\int^t_0 \!\langle\partial_x \tilde{u}^t(s, X^{\varepsilon}_s ), B(X^{\varepsilon}_s,\alpha^{\varepsilon}_s)-\bar B(X^{\varepsilon}_s)\rangle\nonumber\\
&\quad\quad\quad +\frac{1}{2}\text{Tr}\left[\partial_x^{2}\tilde{u}^t(s, X^{\varepsilon}_s)\left(\Sigma(X^{\varepsilon}_s,\alpha^{\varepsilon}_s)-\bar\Sigma(X^{\varepsilon}_s)\right)\right]  ds\Big|\nonumber\\
&+\sqrt{\varepsilon}\Big|\mathbb{E}\big [\mathbb{R}^{\varepsilon}_{\partial_x\tilde{u}^t,\Phi}(0,t)+ \langle\Phi(x,\alpha), \partial_x \tilde{u}^t(0,x)\rangle-\langle\Phi(X_{t}^{\varepsilon},\alpha_{t}^{\varepsilon}), \partial_x \tilde{u}^t(t, \bar{X}_t)\rangle\big]\Big|.\label{F5.3}
\end{align}
 \eref{Phi} and Lemma \ref{Lemma_3.3.2} imply that
\begin{align}
&\sqrt{\varepsilon}\Big[\mathbb{R}^{\varepsilon}_{\partial_x\tilde{u},\Phi}(0,t)+ \langle\Phi(x,\alpha), \partial_x \tilde{u}^t(0,x)\rangle-\langle\Phi(X_{t}^{\varepsilon},\alpha_{t}^{\varepsilon}), \partial_x \tilde{u}^t(t, \bar{X}_t)\rangle\Big]\nonumber\\
\leq&C_{T}(1+|x|^{k})\sqrt{\varepsilon}.\label{F5.4}
\end{align}

Denote
\begin{align*}
\Psi_3(s,x,i)=\langle B(x, i)-B(x),  \partial_x \tilde{u}^t(s,x)\rangle
+\frac{1}{2}\text{Tr}\left[\partial_x^{2}\tilde{u}^t(s, x)\left(\Sigma(x,i)-\bar\Sigma(x)\right)\right].
\end{align*}
We have that, for $s,s_1,s_2\in [0,T]$ and $x,y\in\mathbb{R}^n$,
$$\|\Psi_3(s_1,x,\cdot)-\Psi_3(s_2,y,\cdot)\|_{\infty}\leq C_T(1+|x|^k+|y|^k)\left(|x-y|+|s_1-s_2|\right),$$
$$\sup_{s\in [0,T]}\|\Psi_3(s,x,\cdot)\|_{\infty}\leq C(1+|x|^k).$$
Similarly to the proof of \eref{ConAV}, we can easily obtain:
\begin{equation}
\mathbb{E}\left|\int^t_{t_0}\Psi_3(s,X_{s}^{\varepsilon},\alpha^{\varepsilon}_{s})ds\right|\leq C(1+|x|^{k+1})\varepsilon^{1/2}.\label{ConAV2}
\end{equation}

Finally, according to \eref{F5.3}-\eref{ConAV2}, we have
\begin{equation*}
\begin{aligned}
\sup_{0\leq t\leq T}\left|\mathbb{E}\phi(X^{\varepsilon}_{t})-\mathbb{E}\phi(\bar{X}_{t})\right|
\leq C_{T}(1+|x|^{k+1})\varepsilon^{\frac{1}{2}}.
\end{aligned}
\end{equation*}
The proof is complete.  \hfill\fbox

\vspace{0.3cm}
\textbf{Acknowledgment}. The research of Xiaobin Sun is supported by the NSF of China (Nos.
12271219, 12090010 and 12090011). The research of Yingchao Xie is supported by the NSF of China  (No 12471139) and the Priority Academic Program Development of Jiangsu Higher Education Institutions.

\end{document}